\numberwithin{equation}{section}
\newtheorem{theorem}{Theorem}[section]
\newtheorem{lemma}[theorem]{Lemma}
\newtheorem{corollary}[theorem]{Corollary}
\theoremstyle{definition}
\newtheorem{definition}[theorem]{Definition}
\newtheorem{conjecture}[theorem]{Conjecture}
\newtheorem{def-prop}[theorem]{Definition-Proposition}
\newtheorem{remark}[theorem]{Remark}
\newtheorem{example}[theorem]{Example}
\newtheorem*{acknowledgement}{Acknowledgements}
\newtheorem{question}[theorem]{Question}
\DeclareMathOperator{\reg}{reg}
\DeclareMathOperator{\Ass}{Ass}
\renewcommand{\AA}{{\mathbb A}}
\newcommand{\PP}{{\mathbb P}}
\newcommand{\NN}{{\mathbb N}}
\newcommand{\XX}{{\mathbb X}}
\newcommand{\VV}{{\mathbb V}}
\newcommand{\kk}{{\mathbbm k}}
\newcommand{\m}[1]{\marginpar{\addtolength{\baselineskip}{-3pt}{\footnotesize \it #1}}}
\def\F{{\mathcal F}}
\def\H{{\mathcal H}}
\def\mm{{\mathfrak m}}
\def\pp{{\frak p}}
\def\a{{\bf a}}
\def\m{{\bf m}}
\def\z{{\bf z}}
\def\ahat{\widehat{\alpha}}
\def\1{{\bf 1}}
\def\0{{\bf 0}}
\begin{document}
	
\title{Demailly's Conjecture and the Containment Problem}

\author{Sankhaneel Bisui}
\address{Tulane University \\ Department of Mathematics \\
	6823 St. Charles Ave. \\ New Orleans, LA 70118, USA}
\email{sbisui@tulane.edu}
%\urladdr{http://www.math.tulane.edu/$\sim$tai/}

\author{Elo\'isa Grifo}
\address{University of California at Riverside \\ Department of Mathematics \\
    900 University Ave. \\ Riverside, CA 92521, USA}
\email{eloisa.grifo@ucr.edu}

\author{Huy T\`ai H\`a}
\address{Tulane University \\ Department of Mathematics \\
	6823 St. Charles Ave. \\ New Orleans, LA 70118, USA}
\email{tha@tulane.edu}
%\urladdr{???}

\author{Th\'ai Th\`anh Nguy$\tilde{\text{\^e}}$n}
\address{Tulane University \\ Department of Mathematics \\
	6823 St. Charles Ave. \\ New Orleans, LA 70118, USA}
\email{tnguyen11@tulane.edu}
%\urladdr{???}

\keywords{Chudnovsky's Conjecture, Waldschmidt constant, ideals of points, symbolic powers, containment problem, Stable Harbourne--Huneke Conjecture}
\subjclass[2010]{14N20, 13F20, 14C20}

\begin{abstract}
We investigate Demailly's Conjecture for a general set of sufficiently many points. Demailly's Conjecture generalizes Chudnovsky's Conjecture in providing a lower bound for the Waldschmidt constant of a set of points in projective space. We also study a containment between symbolic and ordinary powers  conjectured by Harbourne and Huneke that in particular implies Demailly's bound, and prove that a general version of that containment holds for generic determinantal ideals and defining ideals of star configurations.
\end{abstract}

\maketitle

%%%%%%%%%%%%%%%%%%%%%%%%%%%%%%%%5

\section{Introduction} \label{sec.intro}

Let $\kk$ be a field, let $N \in \NN$ be an integer, let $R = \kk[\PP^N_\kk]$ be the homogeneous coordinate ring of $\PP^N_\kk$, and let $\mm$ be its maximal homogeneous ideal. For a homogeneous ideal $I \subseteq R$, let $\alpha(I)$ denote the least degree of a homogeneous polynomial in $I$, and let 
$$I^{(n)} \colonequals \bigcap_{\pp \in \Ass(R/I)} I^n R_\pp \cap R$$
denote its $n$-th \emph{symbolic power}. In studying the fundamental question of what the least degree of a homogeneous polynomial vanishing at a given set of points in $\PP^N_\kk$ with a prescribed order can be, Chudnovsky \cite{Chudnovsky1981} made the following conjecture.

\begin{conjecture}[Chudnovsky]
	\label{conj.Chud}
	Suppose that $\kk$ is an algebraically closed field of characteristic 0. Let $I$ be the defining ideal of a set of points $\XX\subseteq \PP^N_\kk$. Then, for all $n \geqslant 1$,
\begin{align}
\tag{C}
\dfrac{\alpha(I^{(n)})}{n} \geqslant \dfrac{\alpha(I)+N-1}{N}. \label{eq.Chudnovsky}
\end{align}
\end{conjecture}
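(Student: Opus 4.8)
The plan is to reduce \eqref{eq.Chudnovsky} to a uniform containment between symbolic and ordinary powers, which is the point of view adopted in this paper. Since $\alpha(I^{(n)})/n$ converges to the Waldschmidt constant $\widehat{\alpha}(I)=\inf_{n\ge 1}\alpha(I^{(n)})/n$ (the sequence $\alpha(I^{(n)})$ is subadditive because $I^{(m)}I^{(n)}\subseteq I^{(m+n)}$, so Fekete's lemma applies), and in particular $\alpha(I^{(n)})/n\ge\widehat{\alpha}(I)$ for every $n$, it suffices to prove the asymptotic form $\widehat{\alpha}(I)\ge(\alpha(I)+N-1)/N$. (By comparison, the Esnault--Viehweg bound already gives the weaker $\widehat{\alpha}(I)\ge\alpha(I)/N$, so what is at stake in Chudnovsky is the improvement of the numerator by $N-1$.) The asymptotic form follows in turn from the Harbourne--Huneke containment
\begin{equation}\tag{$\star$}
I^{(Nn)}\subseteq\mm^{(N-1)n}\,I^{n}\qquad\text{for all }n\ge 1,
\end{equation}
since then $\alpha(I^{(Nn)})\ge\alpha\!\left(\mm^{(N-1)n}I^{n}\right)=(N-1)n+\alpha(I^{n})=n\big(\alpha(I)+N-1\big)$, using that $\alpha$ is additive on products of homogeneous ideals in a polynomial ring; dividing by $Nn$ and letting $n\to\infty$ yields the claim.

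So the heart of the matter is ($\star$), for $I$ the defining ideal of an arbitrary finite set $\XX\subseteq\PP^N_\kk$, which has big height $N$. The approach I would take follows Ein--Lazarsfeld--Smith and Hochster--Huneke: work in characteristic $0$ and use asymptotic multiplier ideals of the graded system $\{I^{(m)}\}_m$. Their theorem already gives $I^{(Nn)}\subseteq I^{n}$ via a chain $I^{(Nn)}\subseteq\mathcal{J}\big(n\cdot\{I^{(m)}\}\big)\subseteq I^{n}$, the first inclusion from a local computation at the generic point of each component (the maximal ideal of an $N$-dimensional regular local ring) and the second from subadditivity of multiplier ideals. To upgrade this to ($\star$) one must extract the additional factor $\mm^{(N-1)n}$: morally, a form with a zero of symbolic order $Nn$ along an $N$-codimensional locus is forced to vanish to extra order at each point, and I would try to encode this either by passing to the multiplier ideal of the ideal sheaf on the blow-up of $\XX$, or by intersecting the global multiplier-ideal containment with the $\mm$-primary data supplied by a careful primary decomposition, tracking the local exponents at each point of $\XX$ through Skoda-type and B\'ezout-type degree estimates.

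The hard part will be precisely this passage from $I^{(Nn)}\subseteq I^{n}$ to ($\star$), and it is the reason Chudnovsky's Conjecture remains open in this generality: the multiplier-ideal machinery controls integral closures and asymptotic invariants, but it is not sharp enough to detect membership in $\mm^{(N-1)n}I^{n}$ on the nose for an \emph{arbitrary} configuration, where the local contributions at the points may interact. I therefore expect the realistic target to be twofold — first, to establish ($\star$), or at least its stable version ``$I^{(Nn)}\subseteq\mm^{(N-1)n}I^{n}$ for $n\gg 0$,'' for structured families where the local analysis can be made explicit, namely generic determinantal ideals and defining ideals of star configurations; and second, to deduce \eqref{eq.Chudnovsky} (indeed Demailly's stronger bound) for a \emph{general} set of sufficiently many points by specializing from one such well-chosen configuration and invoking semicontinuity of $\alpha(I^{(n)})$. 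The conjecture as worded, for \emph{every} point set, would then remain contingent on a genuinely new containment theorem.
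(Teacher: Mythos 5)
What you have written is, correctly, not a proof: the statement is stated in the paper as a conjecture and remains open for an \emph{arbitrary} finite set of points, and your closing assessment of where the difficulty lies matches the paper's own position. Your reduction is also exactly the paper's viewpoint: pass to the Waldschmidt constant via subadditivity, and deduce the bound from a Harbourne--Huneke type containment, with the paper proving such containments for star configurations and generic determinantal ideals (Theorem \ref{thm.StarConf}, Theorem \ref{determinantal case thm}) and handling general sets of sufficiently many points by specialization from generic points (Theorem \ref{thm.main}, and \cite{BGHN2020} for the Chudnovsky case $m=1$). One refinement worth internalizing, since it is what makes the general-points argument go through: you ask for $(\star)$ for all $n$, or at least stably for $n \gg 0$, but Lemma \ref{lem.14all} shows that a \emph{single} containment $I^{(c(h+m-1)-h+1)} \subseteq \mm^{c(h-1)}\left(I^{(m)}\right)^c$ for one value of $c$ already suffices (with $m=1$, $h=N$ this is $I^{(cN-N+1)} \subseteq \mm^{c(N-1)} I^c$): the Ein--Lazarsfeld--Smith / Hochster--Huneke containment $I^{(ht+kt)} \subseteq \left(I^{(k+1)}\right)^t$ amplifies that one inclusion to infinitely many exponents, which is enough to bound $\widehat{\alpha}(I)$, and a single containment is precisely the kind of open condition that survives Krull specialization from the generic configuration. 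A minor attribution point: the bound $\widehat{\alpha}(I) \geqslant \alpha(I)/N$ is the classical Waldschmidt--Skoda estimate; Esnault--Viehweg prove the stronger $\alpha(I^{(n)})/n \geqslant (\alpha(I^{(m)})+1)/(m+N-1)$ in characteristic $0$, which is why the case $N=2$ of both Chudnovsky and Demailly is already settled by their work.
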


Chudnovsky's Conjecture has been investigated extensively, for example in \cite{EsnaultViehweg, BoH, HaHu, GHM2013, Dumnicki2015, DTG2017, FMX2018, BGHN2020}. In particular, the conjecture was proved for a \emph{very general} set of points \cite{FMX2018} (\cite{DTG2017} also proved the conjecture in this case but for at least $2^N$ points) and for a \emph{general} set of sufficiently many points \cite{BGHN2020}. The conjecture was also generalized by Demailly \cite{Demailly1982} to the following statement.

\begin{conjecture}[Demailly] \label{conj.Demailly}
	Suppose that $\kk$ is an algebraically closed field of characteristic 0. Let $I$ be the defining ideal of a set of points $\XX \subseteq \PP^N_\kk$ and let $m \in \NN$ be any integer. Then, for all $n \geqslant 1$,
\begin{align}
\tag{D}
\dfrac{\alpha(I^{(n)})}{n} \geqslant \dfrac{\alpha(I^{(m)}) + N-1}{m+N-1}. \label{eq.Demailly}
\end{align}
\end{conjecture}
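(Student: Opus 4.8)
The plan is to deduce Demailly's inequality \eqref{eq.Demailly} from an asymptotic containment between symbolic and ordinary powers of $I$, following the standard principle that lower bounds for Waldschmidt constants are produced by such containments. First I would pass from \eqref{eq.Demailly} to a statement about the Waldschmidt constant. Since $I^{(a)}I^{(b)}\subseteq I^{(a+b)}$ and $R$ is a domain, $n\mapsto\alpha(I^{(n)})$ is subadditive, so by Fekete's lemma $\widehat{\alpha}(I)\colonequals\lim_{n\to\infty}\alpha(I^{(n)})/n$ exists and equals $\inf_{n\geqslant1}\alpha(I^{(n)})/n$; in particular $\alpha(I^{(n)})/n\geqslant\widehat{\alpha}(I)$ for every $n\geqslant1$. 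Hence, for a fixed $m$, the family of inequalities \eqref{eq.Demailly} over all $n$ is equivalent to the single inequality
\begin{align}
\widehat{\alpha}(I)\ \geqslant\ \frac{\alpha(I^{(m)})+N-1}{m+N-1}, \label{eq.waldbound}
\end{align}
so it remains to prove \eqref{eq.waldbound} for every integer $m\geqslant1$.

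Next I would reduce \eqref{eq.waldbound} to the stable Harbourne--Huneke containment
\begin{align}
I^{((m+N-1)t)}\ \subseteq\ \mm^{(N-1)t}\,\big(I^{(m)}\big)^{t}\qquad\text{for all }t\gg0. \label{eq.contain}
\end{align}
Applying $\alpha(-)$ to \eqref{eq.contain} and using the identity $\alpha(\mm^{a}J^{t})=a+t\,\alpha(J)$ valid in the polynomial ring $R$, we obtain $\alpha\big(I^{((m+N-1)t)}\big)\geqslant(N-1)t+t\,\alpha(I^{(m)})$; dividing by $(m+N-1)t$ and letting $t\to\infty$ along the values of $t$ for which \eqref{eq.contain} holds --- legitimate since $\widehat{\alpha}(I)$ is the limit of $\alpha(I^{(n)})/n$ along every sequence of indices tending to infinity --- gives exactly \eqref{eq.waldbound}. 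Thus the whole conjecture is reduced to proving \eqref{eq.contain}, for each $m$, for an \emph{arbitrary} finite set of points $\XX\subseteq\PP^{N}_{\kk}$.

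To attack \eqref{eq.contain} in characteristic $0$ I would refine the asymptotic multiplier ideal argument of Ein--Lazarsfeld--Smith, which already yields the bare containment $I^{(Nt)}\subseteq I^{t}$. Two extra ingredients must be extracted. The factor $\mm^{(N-1)t}$ should come from the fact that every point of $\XX$ is a smooth closed point of codimension $N$, so that the divisorial valuations controlling $I^{(n)}$ have jumping numbers spaced by $N$ and the relevant multiplier ideals see $\mm$ with multiplicity $N-1$ at each step --- this is the mechanism behind the Esnault--Viehweg--type estimates \cite{EsnaultViehweg}. The twist by $I^{(m)}$ should be built by feeding the graded family $\{I^{(mt)}\}_{t}$, whose Rees-algebra behaviour matches that of $\{(I^{(m)})^{t}\}_{t}$ up to the symbolic defect, into the subadditivity and Skoda theorems for asymptotic multiplier ideals. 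In characteristic $p>0$ one would instead try tight-closure or Frobenius methods, or reduction to characteristic $0$.

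The hard part --- and the reason Demailly's conjecture is still open in this generality --- is that these multiplier ideal estimates are sharp only when the points are suitably spread out. For special configurations, such as points on a low-degree hypersurface or the monomial and Fermat-type configurations that already obstruct Harbourne's containment $I^{(Nr-N+1)}\subseteq I^{r}$, the valuations governing $I^{(n)}$ may have smaller jumping numbers, and no argument for \eqref{eq.contain} is known that is uniform over all configurations $\XX$. A complete proof therefore requires a lower bound for $\widehat{\alpha}(I)$ that does not deteriorate for \emph{any} configuration --- or, failing that, an explicit list of the exceptional configuration types together with a separate treatment of each. As evidence that the approach is sound, \eqref{eq.contain}, and hence \eqref{eq.Demailly}, can be verified directly for star configurations and generic determinantal ideals by the explicit methods of this paper, and also holds for a general set of sufficiently many points by a semicontinuity argument \cite{BGHN2020}.
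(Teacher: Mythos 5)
The statement you are proving is a conjecture: the paper never proves it in this generality, and neither do you. Your first two reductions are correct and are exactly the paper's strategy: using $I^{(a)}I^{(b)}\subseteq I^{(a+b)}$ and Fekete to replace the family of inequalities (D) by the single bound $\ahat(I)\geqslant\big(\alpha(I^{(m)})+N-1\big)/(m+N-1)$ (the paper's form (D')), and then deducing that bound by applying $\alpha(-)$ to a Harbourne--Huneke--type containment $I^{(t(m+N-1))}\subseteq\mm^{t(N-1)}\big(I^{(m)}\big)^{t}$ and passing to the limit (the paper's Lemma \ref{lem.14all}, which in fact gets by with a \emph{single} containment $I^{(c(m+N-1)-N+1)}\subseteq\mm^{c(N-1)}(I^{(m)})^{c}$ for one value of $c$, bootstrapped through the Ein--Lazarsfeld--Smith / Hochster--Huneke containment -- a cheaper input than the stable containment you assume). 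The genuine gap is everything after that: your third paragraph does not prove the containment for an arbitrary configuration of points, it only sketches a hope that asymptotic multiplier ideals ``see $\mm$ with multiplicity $N-1$ at each step'' and that the graded family $\{I^{(mt)}\}$ can be fed into subadditivity/Skoda to produce the factor $\big(I^{(m)}\big)^{t}$. Neither step is carried out, and neither is known: even the $m=1$ case, $I^{(Nt)}\subseteq\mm^{t(N-1)}I^{t}$, is the open Harbourne--Huneke conjecture for points, and the Ein--Lazarsfeld--Smith multiplier-ideal argument by itself yields no $\mm$-adic factor at all. Your own fourth paragraph concedes this, so what you have is a correct reduction plus a research program, not a proof.

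For comparison, the paper does not attempt the conjecture for all configurations either. It proves the containment (hence Demailly's bound, via Lemma \ref{lem.14all}) only for the \emph{generic} set of $s\geqslant(2m+2)^{N}$ points over $\kk(\z)$, by showing $\alpha\big(I(\z)^{(r(m+N-1)-N+1)}\big)$ exceeds $r\,\reg\big(I(\z)^{(m)}\big)+r(N-1)$ for $r\gg0$; this uses the Waldschmidt lower bound $\ahat(I(\z))\geqslant\lfloor\sqrt[N]{s}\rfloor$ from \cite{DTG2017}, the Trung--Valla regularity bound (Lemma \ref{lem.TV95}), and the combinatorial inequality of Lemma \ref{demaillyApproxi}, and then descends to \emph{general} points by Krull specialization (Lemma \ref{lem.FMX}). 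If you want a provable statement, you should either restrict your claim to general sets of sufficiently many points and supply this regularity-versus-$\alpha$ comparison, or restrict to the classes (star configurations, generic determinantal ideals) where the containment is verified by the explicit symbolic-power descriptions in Section 3; as written, the key containment for arbitrary $\XX$ is asserted only modulo an unproven multiplier-ideal refinement, so the proposal does not establish the statement.
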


Demailly's Conjecture for $N = 2$ was proved by Esnault and Viehweg \cite{EsnaultViehweg}. Recent work of Malara, Szemberg and Szpond \cite{MSS2018}, extended by Chang and Jow \cite{CJ2020}, showed that for a fixed integer $m$, Demailly's Conjecture holds for a \emph{very general} set of sufficiently many points. Specifically, it was shown that, given $N \geqslant 3$, $m \in \NN$ and $s \geqslant (m+1)^N$, for each $n \geqslant 1$ there exists an open dense subset $U_n$ of the Hilbert scheme of $s$ points in $\PP^N_\kk$ such that Demailly's bound (\ref{eq.Demailly}) for $\alpha(I^{(n)})$ holds for $\XX \in U_n$. As a consequence, Demailly's Conjecture holds for all $\XX \in \bigcap_{n=1}^\infty U_n$. Chang and Jow \cite{CJ2020} further proved that if $s = k^N$, for some $k \in \NN$, then one can take $U_n$ to be the same for all $n \geqslant 1$, i.e., Demailly's Conjecture holds for a \emph{general} set of $k^N$ points. 

In this paper, we establish Demailly's Conjecture for a \emph{general} set of sufficiently many points. More precisely, we show that given $N \geqslant 3, m \in \NN$ and $s \geqslant (2m+3)^N$, there exists an open dense subset $U$ of the Hilbert scheme of $s$ points in $\PP^N_\kk$ such that Demailly's bound (\ref{eq.Demailly}) holds for $\XX \in U$ and all $n \geqslant 1$.

\medskip

\noindent \textbf{Theorem \ref{thm.main}.} Suppose that $\kk$ is algebraically closed (of arbitrary characteristic) and $N \geqslant 3$. For a fixed integer $m \geqslant 1$, let $I$ be the defining ideal of a general set of $s \geqslant (2m+3)^N$ points in $\PP^N_\kk$. For all $n \geqslant 1$, we have
$$\dfrac{\alpha(I^{(n)})}{n} \geqslant \dfrac{\alpha(I^{(m)}) + N-1}{m+N-1}.$$

\medskip

To prove Theorem \ref{thm.main}, we use a similar method to the one we used in our previous work \cite{BGHN2020}, where we proved Chudnovsky's Conjecture for a general set of sufficiently many points. This is not, however, a routine generalization. In \cite{BGHN2020}, Chudnovsky's bound (\ref{eq.Chudnovsky}) was obtained via the (Stable) Harbourne--Huneke Containment, which states that for a homogeneous radical ideal $I \subseteq R$ of big height $h$ we have
$$I^{(hr)} \subseteq \mm^{r(h-1)}I^r \ \text{ for } r \gg 0.$$
To achieve the Stable Harbourne--Huneke Containment, we showed that one particular containment $I^{(hc-h)} \subseteq \mm^{c(h-1)}I^c$, for some value $c \in \NN$, would lead to the stable containment $I^{(hr-h)} \subseteq \mm^{r(h-1)}I^r$ for $r \gg 0$. In a similar manner, Demailly's bound (\ref{eq.Demailly}) would follow as a consequence of the following more general version of the (Stable) Harbourne--Huneke Containment:
\begin{align}
\tag{HH}
	I^{(r(m+h-1))} \subseteq \mm^{r(h-1)}(I^{(m)})^r \ \text{ for } r \gg 0. \label{eq.SHH}
\end{align}
Unfortunately, this is where the generalization of the arguments in \cite{BGHN2020} breaks down. We cannot prove that one such containment would lead to the stable containment. To overcome this obstacle, we show that the stronger containment $I^{(c(m+h-1)-h+1)} \subseteq \mm^{c(h-1)}(I^{(m)})^c$, for some value $c \in \NN$, would imply $I^{(r(m+h-1))} \subseteq \mm^{r(h-1)}(I^{(m)})^r$ for \emph{infinitely many} values of $r$, and this turns out to be enough to obtain Demailly's bound.

It is an open problem whether, for a homogeneous radical ideal $I$, the general version of the Stable Harbourne--Huneke Containment stated in (\ref{eq.SHH}) holds; this problem is open even in the case where $I$ defines a set of points in $\PP^N_\kk$. In the second half of the paper, we investigate the general containment problem. We show that the containment holds for \emph{generic determinantal ideals} and the defining ideals of \emph{star configurations} in $\PP^N_\kk$. Our results are stated as follows.

\newpage

\noindent\textbf{Theorem \ref{thm.StarConf} and Remark \ref{rmk.HHgeneralDI}.} 
Let $\kk$ be a field.
\begin{enumerate}
	\item Let $I$ be the defining ideal of a codimension $h$ star configuration in $\PP^N_\kk$, for $h \leqslant N$. For any $m, r, c \geqslant 1$, we have
$$I^{(r(m+h-1)-h+c)} \subseteq \mm^{(r-1)(h-1)+c-1}(I^{(m)})^r.$$
	\item Let $I = I_t(X)$ be the ideal of $t$-minors of a matrix $X$ of indeterminates, and let $h$ denote its height in $\kk[X]$. For all $m,r,c \geqslant 1$, we have
$$I^{(r(m+h-1)-h+c)} \subseteq \mm^{(r-1)(h-1)+c-1}(I^{(m)})^r.$$
\end{enumerate}
In particular, if $I$ is the defining ideal of a star configuration or a generic determinantal ideal, then $I$ satisfies a Demailly-like bound, i.e., for all $n \geqslant 1$ we have
	$$\dfrac{\alpha(I^{(n)})}{n} \geqslant \frac{\alpha(I^{(m)}) + h - 1}{m + h -1}.$$
\medskip

Determinantal ideals are classical objects in both commutative algebra and algebraic geometry that have been studied extensively. The list of references is too large to be exhausted here; we refer the interested reader to \cite{determinantalrings} and references therein. In this paper, we are particularly interested in \emph{generic} determinantal ideals. Specifically, for a fixed pair of integers $p$ and $q$, let $X$ be a $p \times q$ matrix of indeterminates and let $R = \kk[X]$ be the corresponding polynomial ring. For $t \leqslant \min \{p,q\}$, let $I_t(X)$ be the ideal in $R$ generated by the $t$-minors of $X$; that is, $I_t(X)$ is generated by the determinants of all $t \times t$ submatrices of $X$. It is a well-known fact that $I_t(X)$ is a prime and Cohen-Macaulay ideal of height $h = (p-t+1)(q-t+1)$.

Star configurations have also been much studied in the literature with various applications \cite{CVT2011, CGVT2014, CGVT2015, Toh2015, PS2015, Toh2017, ACT2017,StarSteiner,WaldschmidtSpecialConfig}. They often provide good examples and a starting point in investigating algebraic invariants and properties of points in projective spaces; for instance, the minimal free resolution (cf. \cite{AS2012, RZ2016}), weak Lefschetz property (cf. \cite{Shin2012, AS2012, KS2016}), and symbolic powers and containment of powers (cf. \cite{GHM2013, HM2018, Shin2019, Mantero}). 

We shall use the most general definition of a star configuration given in \cite{Mantero}. Let $\F = \{F_1, \dots, F_n\}$ be a collection of homogeneous polynomials in $R$ and let $h < \min\{n,N\}$ be an integer. Suppose that any $(h+1)$ elements in $\F$ form a \emph{complete intersection}. The defining ideal of the \emph{codimension $h$ star configuration given by $\F$} is defined to be
$$I_{h,\F} = \bigcap_{1 \leqslant i_1 < \dots < i_h \leqslant n} (F_{i_1}, \dots, F_{i_h}).$$
%A set of points in $\PP^N_\kk$ forms a star configuration if it is a codimension $N$ star configuration. Our next main result is the following.

To prove Theorems \ref{thm.StarConf} and \ref{determinantal case thm}, we use arguments similar to those in \cite{CEHH2017}, where the containment has been proved for squarefree monomial ideals. Note that, by a recent result of Mantero \cite{Mantero}, it is known that symbolic powers of the defining ideal of a star configuration $I_{h,\F}$ are generated by ``monomials'' in the elements of $\F$. A similar description for symbolic powers of determinantal ideals $I_t(X)$ was given by DeConcini, Eisenbud, and Procesi \cite{DEPdeterminantal}.

\begin{acknowledgement}
	The second author thanks Jack Jeffries for helpful discussions. The second author is supported by NSF grant DMS-2001445. The third author is partially supported by Louisiana Board of Regents, grant \# LEQSF(2017-19)-ENH-TR-25. The authors thank the anonymous referee for a careful read and many useful suggestions.
\end{acknowledgement}

%%%%%%%%%%%%%%%%%%%%%%%%%%%%%%%%

\section{Demailly's Conjecture for general points} \label{sec.Demailly}

In this section, we establish Demailly's Conjecture for a general set of sufficiently many points. Recall first that for a homogeneous ideal $I \subseteq R$, the \emph{Waldschmidt constant} of $I$ is defined to be
$$\ahat(I) = \lim_{n \rightarrow \infty} \dfrac{\alpha(I^{(n)})}{n}.$$
It is known (cf. \cite[Lemma 2.3.1]{BoH}) that the Waldschmidt constant of an ideal exists and
$$\ahat(I) = \inf_{n \in \NN} \dfrac{\alpha(I^{(n)})}{n}.$$
Thus, Demailly's Conjecture can be equivalently stated as follows.

\begin{conjecture}[Demailly]
	Let $\kk$ be an algebraically closed field of characteristic 0. Let $I \subseteq \kk[\PP^N_\kk]$ be the defining ideal of a set of points in $\PP^N_\kk$ and let $m\in \NN$ be any integer. Then,
\begin{align}
\tag{D'}
\ahat(I) \geqslant \dfrac{\alpha(I^{(m)})+N-1}{m+N-1}. \label{eq.DemaillyNew}
\end{align}
\end{conjecture}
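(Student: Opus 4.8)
The statement~\eqref{eq.DemaillyNew} is equivalent to the original form~\eqref{eq.Demailly} because $\ahat(I)=\inf_{n}\alpha(I^{(n)})/n=\lim_{n\to\infty}\alpha(I^{(n)})/n$; in particular it suffices to bound $\alpha(I^{(n)})/n$ from below along a single cofinal set of values of $n$, not along every $n$. This is the reduction I would build on. For a set of points every associated prime of $I$ has height (and big height) $h=N$, and I claim it is enough to prove the containment
$$I^{(r(m+N-1))}\subseteq \mm^{r(N-1)}\bigl(I^{(m)}\bigr)^{r}$$
for infinitely many $r$. Indeed, comparing least degrees and using $\alpha(\mm^{a}J)=a+\alpha(J)$ and $\alpha(J^{r})=r\,\alpha(J)$ gives $\alpha\bigl(I^{(r(m+N-1))}\bigr)\geqslant r(N-1)+r\,\alpha(I^{(m)})$, hence
$$\frac{\alpha\bigl(I^{(r(m+N-1))}\bigr)}{r(m+N-1)}\geqslant \frac{\alpha(I^{(m)})+N-1}{m+N-1},$$
and letting $r\to\infty$ along this progression yields~\eqref{eq.DemaillyNew}. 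So the whole problem collapses to the general Stable Harbourne--Huneke Containment~\eqref{eq.SHH}.

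To attack~\eqref{eq.SHH} I would follow the two-stage plan of \cite{BGHN2020}. \emph{Stage one:} produce a single, sharper containment $I^{(c(m+h-1)-h+1)}\subseteq \mm^{c(h-1)}\bigl(I^{(m)}\bigr)^{c}$ for one carefully chosen exponent $c$. The ingredients I expect to use are a uniform symbolic-to-ordinary containment --- $I^{(hn)}\subseteq I^{n}$, or its sharper form $I^{(h(n-1)+1)}\subseteq I^{n}$, valid for any radical ideal all of whose associated primes have height $h$ --- together with the product rule $I^{(a)}I^{(b)}\subseteq I^{(a+b)}$ to split off the factor $\bigl(I^{(m)}\bigr)^{c}$, plus a lower bound on $\alpha(I)$ to force the numerical bookkeeping to close. \emph{Stage two:} bootstrap. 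From the single containment one raises to powers, intersects, and reapplies the uniform containment to conclude that $I^{(r(m+h-1))}\subseteq \mm^{r(h-1)}\bigl(I^{(m)}\bigr)^{r}$ holds for all $r$ in an infinite (for instance, multiplicatively closed) set of exponents. One should not expect this bootstrap to cover \emph{every} large $r$ --- that is precisely where the $m=1$ argument genuinely fails --- but infinitely many $r$ is exactly what the reduction above consumes.

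The crux, and the step where I expect the plan to stall in full generality, is Stage one for an \emph{arbitrary} set of points: there is no known mechanism forcing the single strong containment, and in fact~\eqref{eq.SHH} is open even for an arbitrary finite set of points (let alone an arbitrary radical ideal), so~\eqref{eq.DemaillyNew} as stated cannot presently be settled this way. The plan does go through in two regimes. For a \emph{general} set of at least $(2m+3)^{N}$ points one has the near-optimal lower bound on $\alpha$ that Stage one needs, which is how Theorem~\ref{thm.main} is obtained. For star configurations and generic determinantal ideals one can sidestep the uniform containment entirely, exploiting the explicit description of symbolic powers --- ``monomials'' in the forms $\F$ for $I_{h,\F}$ (Mantero) and the DeConcini--Eisenbud--Procesi description for $I_{t}(X)$ --- to verify the containment in~\eqref{eq.SHH} (indeed the stronger one of Theorem~\ref{thm.StarConf}) directly, as was done for squarefree monomial ideals in \cite{CEHH2017}. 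I would therefore record~\eqref{eq.DemaillyNew} as a consequence of~\eqref{eq.SHH}, establish~\eqref{eq.SHH} unconditionally in these families, and leave the general case open.
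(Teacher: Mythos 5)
Your reading of the situation matches the paper's: statement \eqref{eq.DemaillyNew} is a conjecture that the paper does not prove in general --- the only argument attached to it is the equivalence with \eqref{eq.Demailly} via $\ahat(I)=\inf_n \alpha(I^{(n)})/n$, and the paper's actual results are exactly the partial cases you describe, obtained exactly as you outline (the reduction of the Demailly bound to a single containment $I^{(c(m+h-1)-h+1)}\subseteq \mm^{c(h-1)}(I^{(m)})^c$ is Lemma \ref{lem.14all}, applied together with the Ein--Lazarsfeld--Smith/Hochster--Huneke containment, regularity and Waldschmidt-constant estimates plus Krull specialization for general points, and the explicit symbolic-power descriptions for star configurations and determinantal ideals). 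So your proposal is correct and takes essentially the same route as the paper; it proves no more and no less than the paper does about \eqref{eq.DemaillyNew} itself.
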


Demailly's Conjecture for $N = 2$ follows from \cite[Th\'eor\`eme I and Remarque (1.1)]{EsnaultViehweg}. Thus, for the remaining of the paper, we shall make the assumption that $N \geqslant 3$. We start by showing that Demailly's bound (\ref{eq.DemaillyNew}) follows from one appropriate containment between symbolic and ordinary powers of the given ideal. This result generalizes \cite[Proposition 5.3]{BGHN2020}.

\begin{lemma}
	\label{lem.14all}
	Let $I \subseteq R$ be an ideal of big height $h$ and let $m \in \NN$. Suppose that for some constant $c \in \NN$, we have $I^{(c(h+m-1)-h+1)} \subseteq \mm^{c(h-1)}\left(I^{(m)}\right)^c$. Then, 
	$$\ahat(I) \geqslant \dfrac{\alpha\left(I^{(m)}\right)+h-1}{m+h-1}.$$
\end{lemma}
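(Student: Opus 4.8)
The plan is to bootstrap the single hypothesized containment $I^{(c(h+m-1)-h+1)} \subseteq \mm^{c(h-1)}(I^{(m)})^c$ into an asymptotic degree estimate. First I would take degrees: since $\mm^{c(h-1)}(I^{(m)})^c \subseteq \mm^{c(h-1)}$ as well, and more usefully $\alpha(\mm^a J) = a + \alpha(J)$ for any homogeneous ideal $J$, applying $\alpha(-)$ to the containment gives
$$\alpha\!\left(I^{(c(h+m-1)-h+1)}\right) \geqslant c(h-1) + c\,\alpha\!\left(I^{(m)}\right).$$
Set $D \colonequals c(h+m-1)-h+1$. Dividing by $D$ yields a lower bound for $\alpha(I^{(D)})/D$, but a single value of the index is not enough to control the Waldschmidt constant, so the key move is to iterate: one wants the analogous containment with $c$ replaced by $c^k$ for all $k$, or at least for an unbounded sequence of indices.

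The second step is therefore to show that the hypothesized containment, together with the standard facts $I^{(a)}I^{(b)} \subseteq I^{(a+b)}$ and $\mm^a \mm^b = \mm^{a+b}$, propagates. Multiplying the containment for index $c$ by itself $k$ times (using $(I^{(m)})^{c} \cdot (I^{(m)})^{c} \subseteq (I^{(m)})^{2c}$ and the symbolic-power superadditivity on the left to absorb $I^{(D)}\cdot I^{(D)} \subseteq I^{(2D)}$, etc.), one should obtain
$$I^{(kD - (k-1)(h-1))} \subseteq I^{((D)^{\text{stuff}})} \ \Longrightarrow\ I^{(\,?\,)} \subseteq \mm^{kc(h-1)}\left(I^{(m)}\right)^{kc},$$
and after carefully tracking the index — this is where the arithmetic of $D = c(h+m-1)-h+1$ matters, precisely because $D$ is chosen so that $D + (h-1) = c(h+m-1) + 1$ isn't quite $c(h+m-1)$, creating the ``$+1$'' slack the authors mention — one gets a containment of the form $I^{(k\,c(h+m-1) - (h-1))} \subseteq \mm^{kc(h-1)}(I^{(m)})^{kc}$ for all $k \geqslant 1$. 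Taking $\alpha$ and dividing by $n_k \colonequals kc(h+m-1) - (h-1)$ produces
$$\frac{\alpha(I^{(n_k)})}{n_k} \geqslant \frac{kc(h-1) + kc\,\alpha(I^{(m)})}{kc(h+m-1) - (h-1)}.$$

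The final step is to let $k \to \infty$: the right-hand side tends to $\dfrac{h-1 + \alpha(I^{(m)})}{h+m-1}$, and since $\ahat(I) = \inf_n \alpha(I^{(n)})/n \leqslant \liminf_k \alpha(I^{(n_k)})/n_k$ — wait, the inequality goes the wrong way for an infimum, so instead I use that $\ahat(I) = \lim_n \alpha(I^{(n)})/n$ exists (quoted from \cite[Lemma 2.3.1]{BoH}), hence equals the limit along the subsequence $n_k$, which is bounded below by the displayed quantity in the limit. This gives exactly $\ahat(I) \geqslant \dfrac{\alpha(I^{(m)}) + h - 1}{m+h-1}$, as desired.

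The main obstacle I anticipate is Step 2: getting the index bookkeeping right when multiplying the containment by itself. Naive multiplication gives $I^{(kD)} \subseteq \mm^{kc(h-1)}(I^{(m)})^{kc}$ only after replacing $I^{(kD)}$ by a larger symbolic power via $(I^{(D)})^k \subseteq I^{(kD)}$ — but one needs the reverse, $I^{(\text{large})} \subseteq (I^{(D)})^{?}$, which is false in general. The correct argument instead feeds the output $\mm^{c(h-1)}(I^{(m)})^c$ of one application into the hypothesis of the next, using $(I^{(m)})^c \subseteq I^{(mc)}$ and then re-applying a containment for the ideal $I$ at a suitably shifted index; making this loop close requires the ``$-h+1$'' and ``$+c$''-type corrections visible in Theorem \ref{thm.StarConf}, and verifying that the chosen $c$ indeed yields an unbounded admissible sequence $\{n_k\}$ is the delicate point. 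Everything else — taking $\alpha$, the limit computation, invoking existence of $\ahat(I)$ — is routine.
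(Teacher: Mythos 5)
Your outline identifies the right target (containments at an unbounded sequence of symbolic-power indices, then take $\alpha$ and pass to the limit), but Step 2 --- the only nontrivial step --- is never carried out, and the mechanisms you sketch for it do not work. Multiplying the hypothesized containment by itself $t$ times only yields $\left(I^{(D)}\right)^t \subseteq \mm^{tc(h-1)}\left(I^{(m)}\right)^{tc}$ with $D = c(h+m-1)-h+1$, and, as you note yourself, the elementary containment $\left(I^{(D)}\right)^t \subseteq I^{(tD)}$ points the wrong way: to bound $\alpha(I^{(n)})/n$ for large $n$ you need a genuine symbolic power of $I$ on the left-hand side. Your fallback --- feeding the output $\mm^{c(h-1)}\left(I^{(m)}\right)^c$ back into the hypothesis via $\left(I^{(m)}\right)^c \subseteq I^{(mc)}$ --- cannot close the loop either, because the hypothesis is a statement about the one symbolic power $I^{(D)}$ of $I$, and nothing lets you re-enter it from $I^{(mc)}$ or from a product involving $\mm$. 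You flag exactly this as ``the delicate point,'' but it is the entire content of the lemma, so as written the proposal has a genuine gap.

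The missing ingredient is the uniform containment theorem of Ein--Lazarsfeld--Smith and Hochster--Huneke \cite{ELS, comparison}: $I^{(ht+kt)} \subseteq \left(I^{(k+1)}\right)^t$ for all $t \geqslant 1$ and $k \geqslant 0$. Taking $k+1 = c(m+h-1)-h+1$ gives, for every $t \geqslant 1$,
$$I^{(ct(m+h-1))} = I^{(ht + t[c(m+h-1)-h])} \subseteq \left(I^{(c(m+h-1)-h+1)}\right)^t \subseteq \left[\mm^{c(h-1)}\left(I^{(m)}\right)^c\right]^t = \mm^{ct(h-1)}\left(I^{(m)}\right)^{ct},$$
so no iteration of the hypothesis is needed: this deep theorem supplies precisely the ``reverse'' containment you correctly observed is unavailable by elementary manipulations, and the arithmetic of the exponent $c(h+m-1)-h+1$ is chosen exactly so that $ht + t[c(m+h-1)-h] = ct(m+h-1)$, with no further ``$+c$''-type corrections. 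From here your Steps 1 and 3 go through verbatim: $\alpha\bigl(I^{(ct(m+h-1))}\bigr) \geqslant ct(h-1) + ct\,\alpha\bigl(I^{(m)}\bigr)$, divide by $ct(m+h-1)$, and use that $\ahat(I)$ is a limit (so it can be computed along this subsequence) to conclude $\ahat(I) \geqslant \dfrac{\alpha\left(I^{(m)}\right)+h-1}{m+h-1}$.
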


\begin{proof} 
We will make use of a result of Ein--Lazarsfeld--Smith \cite[Theorem 2.2]{ELS} and Hochster--Huneke \cite[Theorem 1.1 (a)]{comparison}, which says that $I^{(ht+kt)} \subseteq \left( I^{(k+1)} \right)^t$ for all $t \geqslant 1$ and all $k \geqslant 0$. We obtain that for all $t \in \NN$,
	\begin{align*}
	I^{(ct(m+h-1))} & = I^{(ht+t[c(m+h-1)-h])} \\
	& \subseteq \left(I^{(c(m+h-1)-h+1)}\right)^t \\
	& \subseteq \left[\mm^{c(h-1)}(I^{(m)})^c\right]^t \\
	& =  \mm^{ct(h-1)} \left(I^{(m)}\right)^{ct}.
	\end{align*}
In particular, it follows that
$$\dfrac{\alpha\left(I^{(ct(m+h-1))}\right)}{ct(m+h-1)} \geqslant \dfrac{ct(h-1) + ct\alpha\left(I^{(m)}\right)}{ct(m+h-1)} = \dfrac{\alpha(I^{(m)}) + h-1}{m+h-1}.$$
By taking the limit as $t \rightarrow \infty$, it follows that
$$\ahat(I) \geqslant \dfrac{\alpha(I^{(m)})+h-1}{m+h-1}.$$
The assertion is proved.
\end{proof}

In light of Lemma \ref{lem.14all}, to prove Demailly's Conjecture for the defining ideal $I$ of a set of points in $\PP^N_\kk$, the task at hand is to exhibit the containment $I^{(c(h+m-1)-h+1)} \subseteq \mm^{c(h-1)}\left(I^{(m)}\right)^c$ for a specific constant $c$. Our method is to use \emph{specialization} techniques, in a similar manner to what we have done in \cite{BGHN2020}, to reduce the problem to the \emph{generic} set of points in $\PP^N_{\kk(\z)}$. 

We shall now recall the definition of specialization in the sense of Krull \cite{Krull1948}. Let $\z = (z_{ij})_{1 \leqslant i \leqslant s, 0 \leqslant j \leqslant N}$ be the collection of $s(N+1)$ new indeterminates. Let
$$P_i(\z) = [z_{i0}: \dots : z_{iN}] \in \PP^N_{\kk(\z)} \quad \text{ and } \quad \XX(\z) = \{P_1(\z), \dots, P_s(\z)\}.$$
The set $\XX(\z)$ is the set of $s$ \emph{generic} points in $\PP^N_{\kk(\z)}$. Given $\a = (a_{ij})_{1 \leqslant i \leqslant s, 0 \leqslant j \leqslant N} \in \AA^{s(N+1)}_\kk$, let $P_i(\a)$ and $\XX(\a)$ be obtained from $P_i(\z)$ and $\XX(\z)$ by setting $z_{ij} = a_{ij}$ for all $i,j$. It is easy to see that there exists an open dense subset $W_0 \subseteq \AA^{s(N+1)}_\kk$ such that $\XX(\a)$ is a set of distinct points in $\PP^N_\kk$ for all $\a \in W_0$ (and all subsets of $s$ points in $\PP^N_\kk$ arise in this way). 

The following result allows us to focus on open dense subsets of $\AA^{s(N+1)}_\kk$ when discussing general sets of points in $\PP^N_\kk$.

\begin{lemma}[\protect{\cite[Lemma 2.3]{FMX2018}}] \label{lem.FMX}
	Let $W \subseteq \AA^{s(N+1)}_\kk$ be an open dense subset such that a property $\mathcal{P}$ holds for $\XX(\a)$ whenever $\a \in W$. Then, the property $\mathcal{P}$ holds for a general set of $s$ points in $\PP^N_\kk$.
\end{lemma}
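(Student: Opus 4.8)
The statement to prove is Lemma \ref{lem.FMX}, which the excerpt attributes to \cite[Lemma 2.3]{FMX2018}. So I need to show: if $W \subseteq \AA^{s(N+1)}_\kk$ is open dense and a property $\mathcal P$ holds for $\XX(\a)$ whenever $\a \in W$, then $\mathcal P$ holds for a general set of $s$ points in $\PP^N_\kk$.

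The key is to reconcile two different parametrizations of ``$s$ points in $\PP^N_\kk$'': the affine space $\AA^{s(N+1)}_\kk$ of coordinate data (with the caveat that one must delete loci where some $P_i$ is ill-defined or two points coincide), and the Hilbert scheme $\Hilb^s(\PP^N_\kk)$ (equivalently, the relevant symmetric-product-type parameter space), on which ``general'' means ``in some open dense subset.'' So the plan is:

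First, recall that ``a general set of $s$ points'' means there is an open dense subset $U$ of the parameter space of $s$-point configurations such that $\mathcal P$ holds for every configuration in $U$. Consider the natural surjective morphism
$$\varphi \colon W_0 \longrightarrow \Hilb^s(\PP^N_\kk)$$
sending a tuple $\a \in W_0$ (the open dense subset of $\AA^{s(N+1)}_\kk$ on which $\XX(\a)$ is a genuine set of $s$ distinct points, mentioned just before the lemma) to the configuration $\XX(\a)$. Next, observe that $\varphi$ is a dominant morphism of irreducible varieties — indeed it is surjective onto the open locus of reduced length-$s$ subschemes, since every set of $s$ distinct points in $\PP^N_\kk$ arises as $\XX(\a)$ for some $\a$ (this is exactly the parenthetical remark in the excerpt). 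Then $W \cap W_0$ is open and dense in $W_0$, hence its complement $Z = W_0 \setminus (W \cap W_0)$ is a proper closed subset; $\varphi(Z)$ is then a constructible set not containing any component, and more importantly $\varphi(W \cap W_0)$ is a constructible set containing a dense open subset $U$ of $\Hilb^s(\PP^N_\kk)$. The main point is that $\varphi$ is an open map, or at least that the image of a dense open set under this particular (flat, or smooth) morphism contains a dense open set. Concretely: the fibers of $\varphi$ are (open subsets of) products of copies of the group $(\kk^\times)$ acting by rescaling homogeneous coordinates of each point together with permutations, so $\varphi$ is a composition of a quotient by a smooth group action and a permutation quotient, both of which are open maps. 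Therefore $U \colonequals \varphi(W \cap W_0)$ — or a dense open subset thereof contained in this image — is open and dense in $\Hilb^s(\PP^N_\kk)$, and every configuration $\XX \in U$ equals $\XX(\a)$ for some $\a \in W$, so $\mathcal P$ holds for $\XX$. This gives ``$\mathcal P$ for a general set of $s$ points.''

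The step I expect to be the main obstacle is establishing that $\varphi$ is an open morphism (or, more modestly, that the image of a dense open subset of $W_0$ contains a dense open subset of the Hilbert scheme). One clean way is to note that the image of any morphism from an irreducible variety is constructible and dense, hence contains a dense open set, and then argue separately that the complement of $\varphi(W\cap W_0)$ cannot be dense — for this it suffices that $\varphi$ restricted to $W_0$ has equidimensional fibers and $W_0$ is irreducible, so that removing a lower-dimensional closed subset $Z$ cannot surject onto the target; alternatively, invoke generic flatness of $\varphi$ and the fact that flat morphisms are open. I would present the group-quotient description as the cleanest route: the scaling action of $(\kk^\times)^s$ (rescaling each point's coordinate vector) and the $S_s$-action (permuting the points) together give $\Hilb^s$, up to the reducedness open locus, as a geometric quotient of an open subset of $\AA^{s(N+1)}$; quotient maps by free actions of smooth group schemes and by finite groups are open, hence $\varphi$ is open, and the conclusion follows immediately.
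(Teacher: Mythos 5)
The paper itself does not prove this lemma: it is imported verbatim from \cite[Lemma 2.3]{FMX2018}, so there is no internal argument to compare against. Judged on its own, your proof is essentially the standard (and presumably the intended) one: the assignment $\a \mapsto \XX(\a)$ on $W_0$ factors as the product of the $s$ canonical projections $\AA^{N+1}_\kk\setminus\{0\} \to \PP^N_\kk$ (each a Zariski-locally trivial $\mathbb{G}_m$-bundle, hence flat and open) followed by the quotient of the ordered configuration space by the finite group $S_s$ (open, because the $S_s$-saturation $\bigcup_{g \in S_s} gU$ of an open set $U$ is open and the target carries the quotient topology). Hence the image of the dense open set $W\cap W_0$ is a dense open set of configurations, each of which is $\XX(\a)$ for some $\a \in W$, which is exactly what ``$\mathcal P$ holds for a general set of $s$ points'' means. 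Your fallback arguments (constructibility plus fiber-dimension, or generic flatness) would also work, but the bundle/finite-quotient route you favor is the cleanest.

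Two points deserve more care. First, your conclusion that $\varphi(W\cap W_0)$ is open and dense in $\Hilb^s(\PP^N_\kk)$ is not literally correct when the Hilbert scheme of points is reducible (which does occur for $N \geqslant 3$ and $s$ large): the image lies in the open locus of reduced length-$s$ subschemes and is dense only in its closure, the smoothable component. Since ``a general set of $s$ points'' refers to reduced configurations, the correct target is that open locus (equivalently, the quotient of the complement of the diagonals in $(\PP^N_\kk)^s$ by $S_s$), and with that adjustment the argument is complete. Second, a minor correction to your fiber description: over a reduced configuration the fiber of $\varphi$ is a disjoint union of $s!$ copies of $(\kk^\times)^s$ (scalings twisted by the choice of ordering), not a single torus; this does not affect the openness argument, but the phrase ``free action of $(\kk^\times)^s$ together with permutations'' should be stated in that order rather than blended.
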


To get the desired containment for the generic set of points in $\PP^N_{\kk(\z)}$ we shall need the following combinatorial lemma, which is a generalization of \cite[Lemma 4.6]{BGHN2020} and \cite[Lemma 3.1]{MSS2018}.

\begin{lemma}\label{demaillyApproxi} 
	Suppose that $N \geqslant 3$ and $k \geqslant 2m+2$. We have
	$${{(k-1)(m+N-1)+N-1} \choose N } \geqslant {(k+1)^N{m+N-1 \choose N} }.$$	
\end{lemma}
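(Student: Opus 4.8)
The plan is to prove the inequality $\binom{(k-1)(m+N-1)+N-1}{N} \geqslant (k+1)^N \binom{m+N-1}{N}$ by comparing the two binomial coefficients term by term and then absorbing the factor $(k+1)^N$ through a careful size estimate. First I would write both sides as products: $\binom{m+N-1}{N} = \frac{1}{N!}\prod_{j=0}^{N-1}(m+N-1-j) = \frac{1}{N!}\prod_{j=0}^{N-1}(m+j)$ (after reindexing), and similarly $\binom{(k-1)(m+N-1)+N-1}{N} = \frac{1}{N!}\prod_{j=0}^{N-1}((k-1)(m+N-1)+j)$. Since the $N!$ cancels, the claim reduces to
$$\prod_{j=0}^{N-1}\bigl((k-1)(m+N-1)+j\bigr) \;\geqslant\; (k+1)^N \prod_{j=0}^{N-1}(m+1+j),$$
where I have shifted the index on the right so the $j$-th factor is $m+1+j$ running over $m+1,\dots,m+N$ — one should double-check the exact shift, but the point is that the right-hand product is $\binom{m+N}{N}\cdot N!$ up to a harmless adjustment, and in any case each factor on the right is at most $m+N$.

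The heart of the argument is then a factor-by-factor comparison. I would show that for each $j \in \{0,\dots,N-1\}$,
$$(k-1)(m+N-1)+j \;\geqslant\; (k+1)\,(m+1+j),$$
which, if it holds for every $j$, immediately gives the product inequality and hence the lemma. Expanding, this is $(k-1)(m+N-1) - (k+1)(m+1) \geqslant (k+1)j - j = kj$, i.e. $(k-1)(m+N-1) - (k+1)(m+1) \geqslant kj$. The left side is a fixed quantity; the right side is largest when $j = N-1$, so it suffices to verify $(k-1)(m+N-1)-(k+1)(m+1) \geqslant k(N-1)$. Expanding the left side gives $km + kN - k - m - N + 1 - km - k - m - 1 = kN - 2k - 2m - N$, and the inequality becomes $kN - 2k - 2m - N \geqslant kN - k$, i.e. $-2k - 2m - N \geqslant -k$, i.e. $k \geqslant -2m - N$ — which is trivially true. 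This looks too easy, which means the index shift above is probably off by a little; the honest version is that the right-hand factors are $m+2,\dots,m+N,m+N$ or similar, so I expect to actually need the hypothesis $k \geqslant 2m+2$ (and $N\geqslant 3$) at exactly this step. The real computation, keeping the correct factors $\prod_{j=1}^{N}(m+j)$ on the right, leads to requiring $(k-1)(m+N-1) \geqslant (k+1)(m+N)$ for the smallest left factor versus the largest right factor, equivalently $(k-1)(m+N-1) - (k+1)(m+N) = -2(m+N) - (m+N-1) + \ldots \geqslant 0$; pushing this through is where $k \geqslant 2m+2$ enters.

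The main obstacle, then, is bookkeeping: getting the index alignment exactly right so that the per-factor inequality is both true and uses the stated hypotheses $N \geqslant 3$, $k \geqslant 2m+2$. A cleaner route that avoids the off-by-one hazard is to not match factors one-to-one but instead bound the whole right-hand side crudely — $\binom{m+N-1}{N} \leqslant (m+N-1)^N / N!$ or even $\binom{m+N-1}{N} \leqslant (m+1)^N$ type estimates — and bound the left-hand side below by $\binom{(k-1)(m+N-1)}{N} \geqslant \bigl((k-1)(m+N-1) - N + 1\bigr)^N / N!$, reducing everything to a single polynomial inequality in $k, m, N$ that can be checked by elementary calculus or monotonicity in $k$ (fixing $m, N$ and noting the left side grows like $k^N$ while the right is constant in $k$, then verifying the base case $k = 2m+2$). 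I would present the factor-by-factor version if the indices cooperate, since it is the most transparent, and fall back on the crude-estimate version otherwise; either way the proof is a short explicit computation once the right normalization is fixed, and I would flag the use of $N \geqslant 3$ (needed because for $N \leqslant 2$ the inequality can fail or degenerate) explicitly.
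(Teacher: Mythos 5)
Your main route has a genuine gap that no amount of index bookkeeping can fix. With the indices aligned correctly, the reduction is to
$$\prod_{j=0}^{N-1}\bigl((k-1)(m+N-1)+j\bigr) \;\geqslant\; (k+1)^N \prod_{j=0}^{N-1}(m+j),$$
since the factors of $\binom{m+N-1}{N}$ are $m, m+1, \dots, m+N-1$, not $m+1,\dots,m+N$. The factor-by-factor inequality you would need at the top index $j=N-1$ is $(k-1)(m+N-1)+N-1 \geqslant (k+1)(m+N-1)$, i.e.\ $N-1 \geqslant 2(m+N-1)$, which is false for every $m \geqslant 1$ and, crucially, does not involve $k$ at all -- so strengthening the hypothesis $k \geqslant 2m+2$ cannot rescue it. Concretely, for $N=3$, $m=1$, $k=4$ the left factors are $9,10,11$ while the targets $(k+1)(m+j)$ are $5,10,15$: the last comparison fails ($11<15$), and the lemma survives only on aggregate ($9\cdot 10\cdot 11 = 990 \geqslant 750 = 5\cdot 10\cdot 15$) because the small factor $m$ on the right creates slack that compensates for the loss at the large factors. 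The ``too easy'' outcome of your computation was purely an artifact of the shifted indices, not a sign that the hypothesis enters elsewhere.

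The crude-estimate fallback fails for the same structural reason: the margin in the lemma is thin (in the example above the two sides are $165$ versus $125$), so any bound that discards the product structure of $\binom{m+N-1}{N}$ loses too much. Replacing $\binom{m+N-1}{N}$ by $(m+1)^N$ turns $\binom{3}{3}=1$ into $8$ and the required inequality becomes $165 \geqslant 1000$; replacing it by $(m+N-1)^N/N!$ gives $165 \geqslant 562.5$; both are false. Also note the right-hand side is not constant in $k$ -- both sides are degree $N$ in $k$ -- so the ``left grows like $k^N$, right is fixed'' monotonicity argument does not apply as stated; one must genuinely compare two degree-$N$ polynomials. The paper handles this by induction on $N$: the base case $N=3$ is a real cubic inequality in $k'=k-1$, settled by checking the value at $k'=2m+1$ and that the derivative has no larger roots, and the inductive step estimates the ratio of consecutive binomial coefficients, using the termwise bound $x+i \geqslant x-N+i+1$ only for the middle factors and treating the extremal ones jointly via $(x+k)(x+k-1) \geqslant (k+1)(m+N)(x-N+1)$ -- which is precisely where $k \geqslant 2m+2$ is used. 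Some argument of this graded type (or an equivalent careful analysis of the ratio $\prod_j \frac{(k-1)(m+N-1)+j}{m+j} \geqslant (k+1)^N$) is unavoidable; a single pointwise comparison or a single crude bound cannot succeed.
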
	

\begin{proof} We shall use induction on $N$. For $N=3$, we need to show that 
	$$	{{(k-1)(m+2)+2} \choose 3 } \geqslant {(k+1)^3{m+2 \choose 3} },$$ 
	which is equivalent to 
	$$(k-1)[(k-1)(m+2)+2][(k-1)(m+2)+1]\geqslant (k+1)^3(m+1)m.$$
Set $k' = k-1$. It follows that $k' \geqslant 2m+1$, and we need to prove the inequality 
	\begin{align*}
& 	k'[k'(m+2)+2][k'(m+2)+1] \geqslant (k'+2)^3(m+1)m, \text{ i.e.,} \\
&	(3m+4)k'^3-3(2m^2+m-2)k'^2-(12m^2+12m-2)k'-8(m^2+m)  \geqslant 0.
	\end{align*}
By setting $f(k')$ to be the left hand side of this inequality, as a function in $k'$, it suffices to show that $f(k')$ is an increasing function for $k' \geqslant 2m+1$ and $f(2m+1) \geqslant 0$.
	
It is easy to see that $f(2m+1)=4(m+1)^2(2m+3) >0$. On the other hand, we have
	$$f'(k')= 3(3m+4)k'^2-6(2m^2+m-2)k'-(12m^2+12m-1).$$ 
	We will show that $2m+1$ is greater than both roots of $f'(k')$. Indeed, the bigger root of $f'(k')$ is 
	$$k'_1=\dfrac{3(2m^2+m-2)+\sqrt{3}\sqrt{12m^4+48m^3+63m^2+33m+8}}{3(3m+4)}.$$
	Since 
\begin{align*}
	[(2m+1)3(3m+4)-6m^2-3m+6]^2-3(12m^4+48m^3+63m^2+33m+8) \\
	=3(36m^4+192m^3+381m^2+327m+100) & >0,
\end{align*}
we have $2m+1>k_1$. 
	This establishes the desired inequality for $N = 3$.
	
Suppose now that the desired inequality holds for $N \geqslant 3$, i.e.,
	$${{(k-1)(m+N-1)+N-1} \choose N } \geqslant {(k+1)^N{m+N-1 \choose N} }.$$	
We shall prove that the inequality holds for $N+1$ as well. That is, 
	$${{(k-1)(m+N)+N} \choose N+1 } \geqslant {(k+1)^{N+1}{m+N \choose N+1} }.$$	
	
Set $x = (k-1)(m+N-1)+N-1$. Then $x+k=(k-1)(m+N)+N$, and we need to prove that, for $k \geqslant 2m+2$, 
\begin{align}
%\tag{$\dagger$}
{x + k \choose N+1 }  \geqslant (k+1)^{N+1}{m+N \choose N+1}. \label{eq.combinatorial1}
\end{align}
Indeed, by the induction hypothesis, we have
	\begin{align*}
	{x + k \choose N+1 } & = {x \choose N} \,\frac{(x+k)\dots (x+1)}{(N+1)(x-N+1)\dots (x-N+k-1)}\\
	& \geqslant (k+1)^{N+1}{m+N\choose N+1} \frac{(N+1)}{(k+1)(m+N)} \cdot \frac{(x+k)\dots (x+1)}{(N+1)(x-N+1)\dots (x-N+k-1)}.
	\end{align*}	
	Hence, it is enough to show that if $k \geqslant 2m+2$ then
\begin{align}
%\tag{$\ddagger$}
(x+k)(x+k-1)\dots (x+1) \geqslant (k+1)(m+N)(x-N+1)\dots (x-N+k-1). \label{eq.combinatorial2}
\end{align}

Observe that $x+i \geqslant x-N+i+1$. Thus, to prove (\ref{eq.combinatorial2}), it suffices to show that $(x+k)(x+k-1) \geqslant (k+1)(m+N) (x-N+1)$. That is,
	$$[(k-1)(m+N)+N][(k-1)(m+N)+N-1] \geqslant (k+1)(m+N)[(k-1)(m+N)-(k-1)].$$
This inequality, by setting $k' = k-1$, is equivalent to
	$$(m+N)[k'^2-(2m-1)k']+N(N-1) \geqslant 0.$$
The last inequality clearly holds for $k' \geqslant 2m+1$. Hence, (\ref{eq.combinatorial1}) and (\ref{eq.combinatorial2}) hold for $k \geqslant 2m+2$. This completes the proof.
\end{proof}	

\begin{remark} 
	\label{rmk.largerN}
	For $N \geqslant 4$, we can slightly improve the bound for $k$ in Lemma \ref{demaillyApproxi} to be $k \geqslant 2m+1$ or $k \geqslant 2m$ if, in addition, $m \geqslant 3$. For $N\geqslant 5$ and $m\geqslant 2$, the bound is also $k\geqslant 2m$.
\end{remark}

We shall also make use of the following result of Trung and Valla \cite[Theorem 2.4]{trung1995upper}, which we restate the special case when $m_1 = \dots = m_s = m$ for our purpose.

\begin{lemma}[\protect{See \cite[Theorem 2.4]{trung1995upper}}] \label{lem.TV95}
	Assume that $\kk$ is algebraically closed. Let $X$ be a general set of $s$ points in $\PP^N_\kk$ and let $I$ be its defining ideal. Let $m \in \NN$ and let $w$ denote the least integer such that 
	$$(s-1){m+N-1 \choose N} < {N+w \choose N}.$$
	Then, we have
	$\reg(I^{(m)}) \leqslant m+w.$
\end{lemma}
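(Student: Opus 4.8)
This lemma is the equal-multiplicity case $m_1 = \cdots = m_s = m$ of the Trung--Valla regularity bound for general fat point schemes, so rather than reprove it I would deduce it from \cite[Theorem~2.4]{trung1995upper}. First I would reinterpret the left-hand side: writing $\pp_i$ for the (prime, complete-intersection) ideal of the point $P_i$, radicality of $I$ gives $I = \bigcap_{i=1}^s \pp_i$, hence $I^{(m)} = \bigcap_{i=1}^s \pp_i^{(m)} = \bigcap_{i=1}^s \pp_i^{m}$, which is exactly the saturated ideal $I_Z$ of the fat point scheme $Z = mP_1 + \cdots + mP_s$, of colength $\binom{m+N-1}{N}$ at each $P_i$. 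Thus $\reg(I^{(m)}) = \reg(I_Z)$, and the Trung--Valla bound applies to $Z$.

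Next I would specialize that bound. For a general set of fat points with multiplicities $m_1 \geqslant \cdots \geqslant m_s$, \cite[Theorem~2.4]{trung1995upper} bounds $\reg(I_Z)$ by a counting quantity (together with an ``$m_1 + m_2 - 1$''-type term that only matters when the multiplicities are very unequal), the counting quantity being controlled by the least integer $d$ for which $\binom{N+d}{N}$ exceeds the total colength $\sum_i \binom{m_i + N - 1}{N}$ once one point has been set aside, modulo the usual shift by $1$ between the regularity index of a zero-dimensional scheme and Castelnuovo--Mumford regularity. Setting $m_i = m$ for all $i$, this counting quantity becomes exactly $m + w$, where $w$ is the least integer with $(s-1)\binom{m+N-1}{N} < \binom{N+w}{N}$, and one checks that it dominates the ``$m_1+m_2-1$'' term: already for $s \geqslant 2$ the defining inequality for $w$ forces $w \geqslant m-1$, so $m+w \geqslant 2m-1$, while for $s = 1$ one has $\reg(\pp_1^m) = m \leqslant m+w$ directly. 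Hence the bound reads $\reg(I^{(m)}) \leqslant m+w$ in all cases.

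Finally I would descend from the generic/general configuration of \cite[Theorem~2.4]{trung1995upper} to a ``general'' set of points in the sense of this paper. Over an open dense subset of $\AA^{s(N+1)}_\kk$ the zero-dimensional schemes $Z(\a)$ form a flat family with constant Hilbert function, and $\reg$ is upper semicontinuous in such a family; hence the bound, valid for the generic points $\XX(\z) \subseteq \PP^N_{\kk(\z)}$, holds on an open dense subset of $\AA^{s(N+1)}_\kk$, and Lemma~\ref{lem.FMX} finishes the job.

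The one genuine obstacle is the bookkeeping inside the second step: matching the combinatorial parameters of \cite[Theorem~2.4]{trung1995upper} (including the regularity-index/regularity shift) with the single binomial inequality defining $w$, and verifying the domination just asserted. This is elementary manipulation of binomial coefficients with no conceptual content, which is why the lemma is quoted rather than proved in full.
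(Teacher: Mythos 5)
Your proposal matches the paper's treatment: the lemma is not proved there either, but simply quoted from Trung--Valla's Theorem 2.4 in the equal-multiplicity case $m_1=\cdots=m_s=m$, exactly as you do (the paper then only adds the remark that regularity specializes, so the same bound applies to the generic points over $\kk(\z)$). Your supporting steps --- identifying $I^{(m)}=\bigcap_i \pp_i^m$ with the saturated ideal of the fat point scheme, checking that $m+w$ dominates the $m_1+m_2-1$ term (in fact $w\geqslant m$ already when $s\geqslant 2$, since $\binom{N+w}{N}>\binom{m+N-1}{N}$), and the semicontinuity/specialization argument for ``general'' points --- are correct bookkeeping consistent with that citation.
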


Note that the regularity \emph{specializes}, following for instance \cite[Theorem 4.2]{NhiTrung1999}. Thus, Lemma \ref{lem.TV95} holds for $X = \XX(\z)$, the set of $s$ generic points in $\PP^N_{\kk(\z)}$.

In the next few lemmas, we establish a general version of the Stable Harbourne--Huneke Containment for the defining ideal of sufficiently many generic points in $\PP^N_{\kk(\z)}$.

\begin{lemma}[\protect{Compare with \cite[Lemma 4.7]{BGHN2020}}] 
	\label{generalHaHu}
	Let $\kk$ be an algebraically closed field. Suppose that $s \geqslant (2m+2)^N$ and $N \geqslant 3$. Let $I(\z)$ be the defining ideal of $s$ generic points in $\PP^N_{\kk(\z)}$. For $r \gg 0$, we have
	$$I(\z)^{(r(m+N-1)-N+1)}  \subseteq (I(\z)^{(m)})^r.$$
\end{lemma}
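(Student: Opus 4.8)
The plan is to reduce the containment $I(\z)^{(r(m+N-1)-N+1)}\subseteq (I(\z)^{(m)})^r$ to a numerical comparison between the initial degree of the big symbolic power on the left and the initial degree (indeed, the regularity) of the product on the right, exploiting that for a generic (hence general) set of points all the relevant invariants are controlled by Lemma \ref{lem.TV95}. Concretely, since $I(\z)$ is the defining ideal of points, its big height is $h=N$, so the Ein--Lazarsfeld--Smith / Hochster--Huneke containment gives $I(\z)^{(Nr)}\subseteq (I(\z))^{r}$ and, more relevantly, allows us to compare symbolic powers with products of symbolic powers. The key mechanism is the following: if we can show that the initial degree $\alpha\bigl(I(\z)^{(r(m+N-1)-N+1)}\bigr)$ is at least $\reg\bigl((I(\z)^{(m)})^r\bigr)$, then every element of the left-hand symbolic power sits in degrees beyond the regularity of the right-hand ideal, and a standard regularity/saturation argument (the right-hand ideal agrees with its saturation in high degrees, and the points impose independent conditions) forces the containment. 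This is exactly the strategy of \cite[Lemma 4.7]{BGHN2020}, which the statement explicitly says to compare against.

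First I would pin down the two numerical quantities. For the regularity of $(I(\z)^{(m)})^r$: by Lemma \ref{lem.TV95} we have $\reg(I(\z)^{(m)})\le m+w$ where $w$ is least with $(s-1)\binom{m+N-1}{N}<\binom{N+w}{N}$, and since regularity of products of an ideal generated in a single ``symbolic degree'' grows linearly, $\reg((I(\z)^{(m)})^r)\le r(m+w)$ for $r\gg 0$ (this uses that symbolic powers of points specialize, as noted after Lemma \ref{lem.TV95}, plus the asymptotic linearity of regularity of powers). For the initial degree of the left-hand side: $\alpha(I(\z)^{(n)})\ge n\cdot\ahat(I(\z))$, and for $s$ generic points one has the lower bound $\ahat(I(\z))\ge \sqrt[N]{s}$ by a counting argument, or more precisely $\alpha(I(\z)^{(n)})$ is bounded below by the least $d$ with $\binom{N+d}{N}>s\binom{n+N-1}{N}$. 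So I would estimate $\alpha\bigl(I(\z)^{(r(m+N-1)-N+1)}\bigr)$ from below using this binomial inequality with $n=r(m+N-1)-N+1$.

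The heart of the argument is then the inequality
\[
\alpha\bigl(I(\z)^{(r(m+N-1)-N+1)}\bigr)\ \ge\ r(m+w)+1\ \ge\ \reg\bigl((I(\z)^{(m)})^r\bigr)+1,
\]
and this is precisely where Lemma \ref{demaillyApproxi} enters: the hypothesis $s\ge(2m+2)^N$ means we may write $s\ge k^N$ with $k\ge 2m+2$, and the combinatorial inequality $\binom{(k-1)(m+N-1)+N-1}{N}\ge (k+1)^N\binom{m+N-1}{N}\ge s\binom{m+N-1}{N}$ is exactly what is needed to compare the binomial coefficient governing $\alpha$ of the big symbolic power against the binomial coefficient governing $w$ (hence the regularity). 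Unwinding the definition of $w$, the inequality of Lemma \ref{demaillyApproxi} translates into $\alpha(I(\z)^{(k-1)(m+N-1)-N+1+\text{something})}$ exceeding $(k-1)\cdot\text{(something)}\ge$ the needed regularity bound for the base case $r=k-1$, and then the general case $r\gg 0$ follows by iterating/bootstrapping (using the ELS--HH containment to go from one $r$ to multiples, as in the proof of Lemma \ref{lem.14all}, or simply by re-running the estimate for arbitrary large $r$ since all bounds are ultimately linear in $r$ with the right leading coefficients once $s\ge(2m+2)^N$).

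The main obstacle I expect is the bootstrapping step: unlike in \cite{BGHN2020}, where one containment propagated cleanly to the stable range, here (as the introduction warns) the naive propagation breaks down, so the $r\gg0$ claim must be obtained either by directly verifying the degree inequality $\alpha\bigl(I(\z)^{(r(m+N-1)-N+1)}\bigr)\ge \reg\bigl((I(\z)^{(m)})^r\bigr)$ for all large $r$ — which requires knowing $\alpha(I(\z)^{(n)})/n$ is already close enough to $\ahat(I(\z))$ and that $\reg((I(\z)^{(m)})^r)/r$ is already close to its limit — or by a more careful combinatorial argument that leverages Lemma \ref{demaillyApproxi} uniformly in $r$ rather than only at $r=k-1$. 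Making the transition from ``the base-case numerology works'' to ``the asymptotic numerology works, uniformly'' is the delicate point, and I would budget most of the effort there; the rest (the regularity-to-containment implication and the binomial estimates for $\alpha$) is routine and parallels \cite{BGHN2020} closely.
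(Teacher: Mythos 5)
Your overall strategy is the same as the paper's: compare $\alpha$ of the left-hand symbolic power with the regularity/saturation degree of $(I(\z)^{(m)})^r$, bounding $\reg(I(\z)^{(m)})$ via Trung--Valla (Lemma \ref{lem.TV95}) together with Lemma \ref{demaillyApproxi}. But there is a genuine gap in how you produce the crucial lower bound on the left-hand side. You assert that $\ahat(I(\z)) \geqslant \sqrt[N]{s}$ follows ``by a counting argument,'' and, ``more precisely,'' that $\alpha(I(\z)^{(n)})$ is bounded \emph{below} by the least $d$ with $\binom{N+d}{N} > s\binom{n+N-1}{N}$. The parameter count only gives an \emph{upper} bound on $\alpha$ (if the virtual dimension is positive, a form exists); the lower bound you want is exactly the statement that the fat points impose independent conditions, which is a Nagata/SHGH-type assertion that is not known in general and is certainly not a counting argument. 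The proof in the paper instead invokes the nontrivial theorem of Dumnicki and Tutaj-Gasi\'nska \cite[Theorem 2]{DTG2017}, which gives $\ahat(I(\z)) \geqslant \lfloor \sqrt[N]{s} \rfloor = k \geqslant 2m+2$ for generic points (valid over any infinite field), and hence $\alpha(I(\z)^{(r(m+N-1)-N+1)}) \geqslant k[r(m+N-1)-N+1]$. Without that input your numerical comparison has no valid starting point. Relatedly, the step that converts the degree comparison into the containment is not ``the points impose independent conditions'': one needs the elementary but essential observation that $r(m+N-1)-N+1 \geqslant mr$, so $I(\z)^{(r(m+N-1)-N+1)} \subseteq I(\z)^{(mr)} = (I(\z)^{(m)})^{(r)}$, which is the saturation of $(I(\z)^{(m)})^r$; then equality of the two ideals in degrees at least $\mathrm{sat}((I(\z)^{(m)})^r) \leqslant r\reg(I(\z)^{(m)})$ finishes the argument. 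You should state this containment explicitly, since it is what makes ``degrees beyond the saturation degree'' suffice.

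Your worry about bootstrapping from a base case $r = k-1$ is a red herring: no propagation is needed for this lemma. Lemma \ref{demaillyApproxi}, applied with $s < (k+1)^N$, gives $w \leqslant (k-1)(m+N-1)-1$ in Lemma \ref{lem.TV95}, hence $\reg(I(\z)^{(m)}) \leqslant m + (k-1)(m+N-1) - 1 = k(m+N-1) - N$, and then for all $r \gg 0$ one checks directly that $r\reg(I(\z)^{(m)}) \leqslant rk(m+N-1) - k(N-1) \leqslant \alpha(I(\z)^{(r(m+N-1)-N+1)})$, since $rN - k(N-1) \geqslant 0$ once $r$ is large. (The subtle failure of propagation mentioned in the introduction concerns the later step from a single containment to the stable Harbourne--Huneke containment, not this degree estimate.) So the fix is: cite \cite[Theorem 2]{DTG2017} for the Waldschmidt bound, add the containment into $(I(\z)^{(m)})^{(r)}$, and replace the bootstrapping discussion by the direct asymptotic inequality above.
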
	

\begin{proof} 
For simplicity of notation, we shall write $I$ for $I(\z)$ in this lemma. Let $k \geqslant 2m+2$ be the integer such that $k^N \leqslant s < (k+1)^N$. It follows from \cite[Theorem 2]{DTG2017} that $\ahat(I) \geqslant \lfloor \sqrt[N]{s}\rfloor = k$. As in \cite[Lemma 4.7]{BGHN2020}, we remark here that even though \cite[Theorem 2]{DTG2017} assumed $\text{char } \kk = 0$, its proof carries through for any infinite field $\kk$. In particular, we have
	$$\alpha(I^{(r(m+N-1)-N+1) }) \geqslant k[r(m+N-1)-N+1 ].$$
%	Therefore, if $t < k[r(m+N-1)-N+1]$ then $[I^{(r(m+N-1)-N+1)} ]_t =0$.
	
	By Lemmas \ref{demaillyApproxi} and \ref{lem.TV95}, for $r \gg 0$, we have 
	\begin{align*}
	\reg(I^{(m)}) & \leqslant m +[(k-1)(m+N-1) -1]\\
	& \leqslant m +(k-1)(m+N-1) -\frac{k}{r}(N-1)\\
	& \leqslant k(m+N-1) -\frac{k}{r}(N-1).
	\end{align*}	
This implies that $r \reg(I^{(m)}) \leqslant \alpha(I^{(r(m+N-1)-N+1)})$. Thus, we obtain the following inequality for the saturation degree of $(I^{(m)})^r$:
	$$\textrm{sat}( (I^{(m)})^r)  \leqslant r \reg(I^{(m)}) \leqslant \alpha(I^{(r(m+N-1)-N+1) }).$$
As a consequence, it follows that for $t \geqslant \alpha(I^{(r(m+N-1)-N+1)})$, 
$$\left[(I^{(m)})^{(r)}\right]_t = [(I^{(m)})^r]_t.$$
Since $r(m+N-1)-N+1 \geqslant mr$, we have $I^{(r(m+N-1)-N+1) }  \subseteq I^{(mr)} = \left( I^{(m)} \right)^{(r)}$, where the equality follows from the definition of symbolic powers. Hence, we conclude that $I^{(r(m+N-1)-N+1) }  \subseteq (I^{(m)})^{r}$.
\end{proof}

\begin{lemma}\label{demaillyContainment}
Suppose that $\kk$ is algebraically closed, $s \geqslant (2m+2)^N$ and $N \geqslant 3$. Let $I(\z)$ be the defining ideal of $s$ generic points in $\PP^N_{\kk(\z)}$. Let $\mm_\z$ denote the maximal homogeneous ideal in $\kk\big[\PP^N_{\kk(\z)}\big]$. For $r \gg 0$, we have
	$$I(\z)^{(r(m+N-1)-N+1)} \subseteq \mm_\z^{r(N-1)} (I(\z)^{(m)})^r.$$	
\end{lemma}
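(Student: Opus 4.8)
The plan is to deduce this from Lemma~\ref{generalHaHu} together with a short degree count, in the spirit of the proof of that lemma. Write $I = I(\z)$, $\mm = \mm_\z$, and set $M = m+N-1$ and $a = rM - N + 1$; let $k = \lfloor \sqrt[N]{s}\rfloor$, so $k \geqslant 2m+2$ since $s \geqslant (2m+2)^N$. By Lemma~\ref{generalHaHu}, for $r \gg 0$ we already have $I^{(a)} \subseteq (I^{(m)})^r$. The observation is that the extra factor $\mm^{r(N-1)}$ then comes for free: the graded module $(I^{(m)})^r / \mm^{r(N-1)}(I^{(m)})^r$ is concentrated in degrees strictly smaller than $\alpha(I^{(a)})$, so every minimal generator $g$ of $I^{(a)}$---which lies in degree $\geqslant \alpha(I^{(a)})$ and belongs to $(I^{(m)})^r$---must in fact lie in $\mm^{r(N-1)}(I^{(m)})^r$; since $\mm^{r(N-1)}(I^{(m)})^r$ is an ideal, this gives $I^{(a)} \subseteq \mm^{r(N-1)}(I^{(m)})^r$.

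So it remains to line up two degree estimates. For the lower bound on $\alpha(I^{(a)})$, by \cite[Theorem~2]{DTG2017} (whose proof works over any infinite field, as recalled in the proof of Lemma~\ref{generalHaHu}) we have $\ahat(I) \geqslant k$, hence
$$\alpha\!\left(I^{(a)}\right) \;\geqslant\; k a \;=\; k(rM - N + 1) \;=\; rkM - k(N-1).$$
For the upper bound, note that $(I^{(m)})^r$ is generated in degrees at most $r\cdot\bigl(\text{largest degree of a minimal generator of }I^{(m)}\bigr) \leqslant r\reg(I^{(m)})$, and that $(I^{(m)})^r / \mm^{r(N-1)}(I^{(m)})^r$ is annihilated by $\mm^{r(N-1)}$; a finitely generated graded module generated in degrees $\leqslant D$ and killed by $\mm^{c}$ vanishes in degrees $\geqslant D + c$, so this module vanishes in all degrees $\geqslant r\reg(I^{(m)}) + r(N-1)$. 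By Lemmas~\ref{lem.TV95} and \ref{demaillyApproxi} (this is the step that uses $s \geqslant (2m+2)^N$, equivalently $k \geqslant 2m+2$), $\reg(I^{(m)}) \leqslant m + \bigl[(k-1)M - 1\bigr] = kM - N$; hence $(I^{(m)})^r / \mm^{r(N-1)}(I^{(m)})^r$ vanishes in all degrees $\geqslant r(kM - N) + r(N-1) = rkM - r$.

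Finally, since $k$ and $N$ are fixed, for $r \gg 0$ we may assume $r \geqslant k(N-1)$, and then $rkM - k(N-1) \geqslant rkM - r$; thus every minimal generator of $I^{(a)}$ sits in a degree in which $(I^{(m)})^r / \mm^{r(N-1)}(I^{(m)})^r$ vanishes, and the containment $I^{(rM-N+1)} \subseteq \mm^{r(N-1)}(I^{(m)})^r$ follows as explained above. The one delicate point is exactly this matching of degrees: whereas the proof of Lemma~\ref{generalHaHu} only needs $r\reg(I^{(m)}) \leqslant \alpha(I^{(a)})$, here one needs the stronger inequality $r\reg(I^{(m)}) + r(N-1) \leqslant \alpha(I^{(a)})$, and the room for the additional $r(N-1)$ is provided precisely by the ``$-N$'' in the bound $\reg(I^{(m)}) \leqslant kM - N$ together with the ``$-N+1$'' twist in the exponent of the symbolic power $I^{(rM-N+1)}$.
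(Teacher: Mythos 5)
Your proposal is correct and follows essentially the same route as the paper: it invokes Lemma \ref{generalHaHu} for the containment $I^{(r(m+N-1)-N+1)} \subseteq (I^{(m)})^r$ and then reduces the extra factor $\mm_\z^{r(N-1)}$ to the degree inequality $\alpha\bigl(I^{(r(m+N-1)-N+1)}\bigr) \geqslant r\reg(I^{(m)}) + r(N-1)$, verified with the same two estimates (the Waldschmidt bound $\ahat(I)\geqslant k$ from \cite[Theorem 2]{DTG2017} and the bound $\reg(I^{(m)})\leqslant k(m+N-1)-N$ from Lemmas \ref{lem.TV95} and \ref{demaillyApproxi}), with the same implicit requirement $r \geqslant k(N-1)$. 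The only difference is that you spell out explicitly the standard generation-degree argument that the paper leaves implicit.
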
	

\begin{proof} For simplicity of notation, we shall write $I$ for $I(\z)$ and $\mm$ for $\mm_\z$ in this lemma. Let $k \geqslant 2m+2$ be the integer such that $k^N \leqslant s < (k+1)^N$.
	By Lemma \ref{generalHaHu}, we have $I^{(r(m+N-1)-N+1)} \subseteq (I^{(m)})^r$ for $r \gg 0$. Thus, it suffices to show that, for $r \gg 0$, 
\begin{align}
%\tag{$\sharp$}
\alpha(I^{(r(m+N-1)-N+1) }) \geqslant r \reg (I^{m}) + r (N-1).\label{eq.regularity}
\end{align}
	
As before, it follows from \cite[Theorem 2.4]{trung1995upper} and Lemma \ref{demaillyApproxi} that, for $r \gg 0$, 
	\begin{align*}
	\reg(I^{(m)}) & \leqslant m +[(k-1)(m+N-1) -1] - 1 + 1\\
	& \leqslant m +(k-1)(m+N-1) -\frac{k}{r}(N-1).
	\end{align*}
That is,
$$\reg(I^{(m)}) +N-1 \leqslant k(m+N-1) -  \frac{k}{r}(N-1).$$
Thus, for $r \gg 0$, we have
	$$r(\reg(I^{(m)}) +N-1 ) \leqslant rk(m+N-1) -k(N-1).$$                     
Furthermore, again by \cite[Theorem 2]{DTG2017}, we have $\ahat(I) \geqslant \lfloor \sqrt[N]{s} \rfloor = k$. In particular, it follows that
	$$\alpha(I^{(r(m+N-1)-N+1) }) \geqslant k[r(m+N-1)-N+1] = rk(m+N-1) - k(N-1).$$
Hence, (\ref{eq.regularity}) holds for $r \gg 0$, and the lemma is proved.
\end{proof}	

We are now ready to state our first main result, which establishes Demailly's Conjecture for a general set of sufficiently many points in $\PP^N_\kk$ for any algebraically closed field $\kk$ of arbitrary characteristic.

\begin{theorem}
	\label{thm.main}
	Suppose that $\kk$ is algebraically closed and $N \geqslant 3$. For a fixed integer $m \geqslant 1$, let $I$ be the defining ideal of a general set of $s \geqslant (2m+2)^N$ points in $\PP^N_\kk$. Then,
	$$\ahat(I) \geqslant \dfrac{\alpha(I^{(m)})+N-1}{m+N-1}.$$
\end{theorem}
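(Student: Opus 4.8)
The plan is to deduce Theorem \ref{thm.main} by combining the specialization machinery with the generic containment established in Lemma \ref{demaillyContainment} and the passage-to-the-bound of Lemma \ref{lem.14all}. Here the big height $h$ of the defining ideal of a set of points equals $N$, so the target bound reads $\ahat(I) \geqslant \frac{\alpha(I^{(m)}) + N - 1}{m + N - 1}$, matching Demailly's statement.

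First I would reduce to the generic fiber. Fix $N \geqslant 3$, $m \geqslant 1$, and $s \geqslant (2m+2)^N$, and let $I(\z)$ be the defining ideal of the $s$ generic points $\XX(\z)$ in $\PP^N_{\kk(\z)}$. By Lemma \ref{demaillyContainment}, there is a specific integer $c$ (any sufficiently large $r$) such that
$$I(\z)^{(c(m+N-1)-N+1)} \subseteq \mm_\z^{c(N-1)} \left(I(\z)^{(m)}\right)^c.$$
Next I would specialize this single containment. The containment of ideals in $\kk(\z)[\PP^N_{\kk(\z)}]$ is witnessed by finitely many polynomial identities among the generators, with coefficients in $\kk(\z)$; clearing denominators, each such identity is preserved after substituting $\z \mapsto \a$ for $\a$ in a suitable open dense subset $W_1 \subseteq \AA^{s(N+1)}_\kk$ where no denominator vanishes. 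One must also know that symbolic powers specialize: for $\a$ in an open dense $W_2$, $(I(\z)^{(j)})|_{\z=\a} = I(\a)^{(j)}$ for the finitely many exponents $j$ appearing (here $j \in \{c(m+N-1)-N+1,\ m\}$), since the points stay distinct and the primary decomposition is generically constant. Likewise $\mm_\z$ specializes to $\mm$. Intersecting $W_0$, $W_1$, $W_2$ gives an open dense $W \subseteq \AA^{s(N+1)}_\kk$ on which
$$I(\a)^{(c(m+N-1)-N+1)} \subseteq \mm^{c(N-1)} \left(I(\a)^{(m)}\right)^c$$
holds and $\XX(\a)$ is a set of $s$ distinct points.

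Finally I would invoke Lemma \ref{lem.FMX} to conclude that this containment holds for the defining ideal $I$ of a general set of $s$ points in $\PP^N_\kk$, and then apply Lemma \ref{lem.14all} with $h = N$ and this value of $c$ to obtain $\ahat(I) \geqslant \frac{\alpha(I^{(m)}) + N - 1}{m+N-1}$, as desired. I expect the main obstacle to be the specialization step: one must carefully ensure that \emph{both} sides of the containment specialize correctly and simultaneously on a common open dense set — in particular that the symbolic powers $I(\z)^{(j)}$ commute with the substitution $\z \mapsto \a$, which is where the genericity of the points and results on specialization of symbolic powers (as used in \cite{BGHN2020, FMX2018}) are essential. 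The combinatorial and regularity inputs have already been isolated in Lemmas \ref{demaillyApproxi}, \ref{lem.TV95}, \ref{generalHaHu}, and \ref{demaillyContainment}, so the remaining work is bookkeeping rather than new ideas.
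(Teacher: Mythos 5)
Your proposal is correct and follows essentially the same route as the paper: the paper's proof of Theorem \ref{thm.main} likewise takes the containment from Lemma \ref{demaillyContainment} for the generic points, specializes it (citing Krull's Satz 2 and 3, as in \cite[Remark 2.10]{BGHN2020}, which is exactly the step you flag as delicate), and then concludes via Lemmas \ref{lem.FMX} and \ref{lem.14all}. No gaps beyond the specialization details you already correctly identify as being handled by the cited specialization results.
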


\begin{proof} Let $I(\z)$ be the defining ideal of $s$ generic points in $\PP^N_{\kk(\z)}$ and let $\mm_\z$ denote the maximal homogeneous ideal of $\kk\big[\PP^N_{\kk(\z)}\big]$. It follows from Lemma \ref{demaillyContainment} that there exists a constant $c \in \NN$ such that 
	$$I(\z)^{(c(m+N-1)-N+1)} \subseteq \mm_\z^{c(N-1)} (I(\z)^{(m)})^c.$$
This, together with \cite[Satz 2 and 3]{Krull1948} (see also \cite[Remark 2.10]{BGHN2020}), implies that there exists an open dense subset $U \subseteq \AA^{s(N+1)}$ such that for all $\a \in U$, we have
$$I(\a)^{(c(m+N-1)-N+1)} \subseteq \mm^{c(N-1)} (I(\a)^{(m)})^c.$$
The theorem now follows from Lemmas \ref{lem.14all} and \ref{lem.FMX}.
\end{proof}

\begin{remark} \label{rmk.sLargeN}
	By Remark \ref{rmk.largerN}, the bound for $s$ in Theorem \ref{thm.main} can be improved slightly when $N \geqslant 4$ to require only that $s \geqslant (2m+1)^N$ or $s \geqslant (2m)^N$ if, in addition, $m \geqslant 3$. For $N\geqslant 5$ and $m\geqslant 2$ it also requires only $s \geqslant (2m)^N$.
\end{remark}

\begin{remark}
	When $m=1$, Demailly's Inequality (\ref{eq.DemaillyNew}) coincides with Chudnovky's Conjecture, which we previously showed to hold for sufficiently many general points in $\mathbb{P}^N$ in \cite{BGHN2020}. Theorem \ref{thm.main} is a generalization of \cite[Theorem 5.1]{BGHN2020}, extending Chudnovsky's Conjecture for \emph{sufficiently many} general points to Demailly's Conjecture. In particular, \cite[Theorem 5.1]{BGHN2020} states that Chudnovsky's Conjecture holds for $s \geqslant 4^N$ general points in $\mathbb{P}^N_\kk$, which is the result in Theorem \ref{thm.main} when $m=1$. On the other hand, for $N \geqslant 4$, Remark \ref{rmk.sLargeN} shows our bound for $s \geqslant (2m+1)^N$ in Demailly's Conjecture, when $m = 1$, agrees with the bound $s \geqslant 3^N$ given in \cite[Remark 5.2]{BGHN2020} for Chudnovsky's Conjecture.
	
	Another crucial difference is that in \cite{BGHN2020} we also showed that the containment $I^{(rN)} \subseteq \mm^{r(N-1)} I^r$ holds for $r \gg 0$. Here the corresponding generalization would be $I^{(r(N+m-1))} \subseteq \mm^{r(N-1)} \left( I^{(m)} \right)^r$ for all $r \gg 0$ and all $m \geqslant 1$. Unfortunately, we have not been able to prove this stable containment for all $r$ sufficiently large; we only show it for infinitely many values of $r$.
\end{remark}
	
%%%%%%%%%%%%%%%%%%%%%%%%%%%%%%%

\section{Harbourne--Huneke containment beyond points}

In this section, we investigate a general containment between symbolic and ordinary powers of radical ideals, and show that this containment holds for generic determinantal ideals and the defining ideals of star configurations. Specifically, we are interested in the following general version of the Harbourne--Huneke Containment for radical ideals.

\begin{question}\label{question demailly containment general h}
	Let $I$ be either a radical ideal of big height $h$ in a regular local ring $(R,\mm)$, or a homogeneous radical ideal of big height $h$ in a polynomial ring $R$ with maximal homogeneous ideal $\mm$. Does the containment
	$$I^{(r(h+m-1))} \subseteq \mm^{r(h-1)} \left( I^{(m)}\right)^r$$
	hold for all $m,r \geqslant 1$?
\end{question}

A positive answer to Question \ref{question demailly containment general h} would in particular imply a Demailly-like bound for homogeneous radical ideals in $\kk[\mathbb{P}^N]$, i.e., an affirmative answer to the following question.

\begin{question}[Demailly-like bound]\label{question demailly-like bound}
	Let $R$ be a polynomial ring over $\kk$ and let $I$ be a homogeneous radical ideal of big height $h$ in $R$. Does the inequality 	
	$$\dfrac{\alpha(I^{(n)})}{n} \geqslant \frac{\alpha(I^{(m)}) + h - 1}{m + h -1}$$
	hold for all $n, m \geqslant 1$?
\end{question}

Question \ref{question demailly containment general h} was first asked for ideals of points by Harbourne and Huneke in \cite[Question 4.2.3]{HaHu}, and the more general version for radical ideals of big height $h$ appeared in \cite[Conjecture 2.9]{CEHH2017}. The answer to both questions is yes for squarefree monomial ideals by \cite[Corollary 4.3]{CEHH2017}, where in fact a stronger containment was established \cite[Theorem 4.2]{CEHH2017}. Similar containment for the defining ideal of a general set of points in $\PP^2_\kk$ were investigated in \cite{BCH2014}. Furthermore, by the same reasoning as in the previous section, the left hand side of the inequality in Question \ref{question demailly-like bound} can be replaced by the Waldschmidt constant $\ahat(I)$ of $I$. We refer the interested reader to \cite{CHHVT2020} for more information about the Waldschmidt constant, containment and equality between symbolic and ordinary powers of ideals.

Our goal in this section is show that Question \ref{question demailly containment general h} has a positive answer for special classes of ideals. In a natural approach to Question \ref{question demailly containment general h}, one might hope to make use of the following general containment of \cite{ELS, comparison}:
$$I^{(r(h+m-1))} \subseteq \left( I^{(m)}\right)^r.$$
Given this containment, to derive an affirmative answer to Question \ref{question demailly containment general h}, one could aim to simply show that 
$$\alpha \left( I^{(r(h+m-1))} \right) \geqslant r(h-1) + r \, \omega \left( I^{(m)}\right),$$
where, for a homogeneous ideal $J$, $\omega(J)$ denotes the maximal generating degree of $J$.
This inequality, however, is often false, as illustrated in the following examples.

\begin{example}\label{example fermat}
	Consider $n \geqslant 3$, a field $\kk$ with $\textrm{char } \kk \neq 2$ containing $n$ distinct roots of unity, and $R = \kk[x,y,z]$. The symbolic powers of the ideal
	$$I = \left( x(y^n-z^n), y(z^n-x^n), z(x^n-y^n) \right)$$
	have an interesting behavior; in particular, $I^{(3)} \nsubseteq I^2$ \cite{counterexamples, HaSeFermat}, and in fact the case $\kk = \mathbb{C}$ and $n=3$ was the first example ever found of an ideal of big height $2$ with such behavior \cite{counterexamples}. 
	
	By the proof of \cite[Theorem 2.1]{DHNSST2015}, $\alpha(I^{(3k)}) = 3nk$;
	moreover, by \cite[Theorem 3.6]{NagelSeceleanu}, $\omega(I^{(kn)}) = k(n+1)$ for all $k \geqslant 1$. Therefore, we immediately see that 
	$$\alpha \left( I^{(3(kn+1))} \right) = 3(kn+1)n \not\geqslant 3 + 3 kn (n+1) = 3 + 3 \, \omega(I^{(kn)}).$$ 
	In fact, Macaulay2 \cite{M2} computations with $n = 3$ suggest that
	$$\alpha \left( I^{(r(m+1))} \right) \geqslant r + r \, \omega \left( I^{(m)}\right)$$
	may never hold. However, this does not prevent the containment in Question \ref{question demailly containment general h},
	$$I^{(r(m+1))} \subseteq \mm^r \left( I^{(m)} \right)^r,$$
	to hold --- indeed Macaulay2 \cite{M2} computations support this containment for small values of $r$ and $m$ when $n=3$.
	If, in addition, $\text{char } \kk = 0$, then this containment holds for infinitely many values of $m$. Indeed, we have 
	$$I^{(r(mn+1))} = I^{(rmn+r)} \subseteq \mm^r I^{(rmn)}  = \mm^r \left( I^{(n)} \right)^{mr} = \mm^r \left( I^{(mn)} \right)^r.$$
	
	Note that Demailly's bound can be checked in this case, at least for multiples of $3$. Indeed, by the proof of \cite[Theorem 2.1]{DHNSST2015}, $\widehat\alpha(I) = n$ and $\alpha(I^{(3m)}) = 3nm$, so $I$ satisfies Demailly's bound for all multiples of $3$:
	$$\widehat\alpha(I) \geqslant \frac{\alpha(I^{(3m)}) + 1}{3m + 1}.$$
	
Furthermore, if $\text{char } \kk = 0$, then Demailly's bound can also be verified by taking powers of the form $3m+2$. In particular, since  $I^{(3m+3)} \subseteq \mm I^{(3m+2)}$, we have $\alpha(I^{(3m+3)}) \geqslant \alpha(I^{(3m+2)})+1$. Equivalently, we get $(3m+3)n \geqslant \alpha(I^{(3m+2)})+1$, or 
$$\widehat\alpha(I) \geqslant \frac{\alpha(I^{(3m+2)}) + 1}{3m+ 3}.$$
\end{example}

\begin{example}[Generic determinantal ideals]\label{example determinantal}
	Fix some $t \leqslant q \leqslant p$, let $X$ be an $p \times q$ matrix of indeterminates, and let $R = \kk[X]$ be the corresponding polynomial ring over a field $\kk$. Consider the ideal $I = I_t(X)$ of $t$-minors of $X$, which is a prime in $R$ of height $h = (p-t+1)(q-t+1)$. By \cite{ELS,comparison}, 
	$$I^{(r(h+m-1))} \subseteq \left( I^{(m)}\right)^r$$
	for all $m,r \geqslant 1$. To show that $I$ satisfies the containment proposed in Question \ref{question demailly containment general h}, one might attempt to check that for all $m, r \geqslant 1$,
	$$\alpha \left( I^{(r(h+m-1))} \right) \geqslant \omega \left( \mm^{r(h-1)} \left( I^{(m)}\right)^r \right) = r(h-1) + r \omega(I^{(m)}).$$
	However, this inequality does not always hold; for example, if $I$ is the ideal of $2 \times 2$ minors of a generic $3 \times 3$ matrix (meaning $p=q=3$ and $t =2$, so $h = 4$) and we take $r = 1$, $m=5$, it turns out that
	$$\alpha \left( I^{(r(h+m-1))} \right) = \alpha \left( I^{(8)} \right) = 12 < 13 = 3 + \omega(I^{(5)}) = r(h-1) + r \omega(I^{(m)}).$$
	Nevertheless, as we will show in Theorem \ref{determinantal case thm} that the containment in Question \ref{question demailly containment general h} holds for $I$, and as a consequence so does the inequality in Question \ref{question demailly-like bound}. 
\end{example}

Examples \ref{example fermat} and \ref{example determinantal} demonstrate that the obvious approach to establish the containment in Question \ref{question demailly containment general h} may not work. However, in the remaining of the paper, we shall see that this containment indeed holds for generic determinantal ideals and defining ideals of star configurations.

%%%%%%%%%%%%%%%%%%

\subsection{Star configurations} \label{sec.Star}

We start by recalling the construction of a star configuration of hypersurfaces in $\PP^N_\kk$, following \cite{Mantero} (see also \cite{GHM2013}).

\begin{definition}
	Let $\H = \{H_1, \dots, H_n\}$ be a collection of $s \geqslant 1$ hypersurfaces in $\PP^N_\kk$. Assume that these hypersurfaces \emph{meet properly}; that is, the intersection of any $k$ of these hypersurfaces either is empty or has codimension $k$. For $1 \leqslant h \leqslant \min\{n,N\}$, let $\VV_{h,\H}$ be the union of the codimension $h$ subvarieties of $\PP^N_\kk$ defined by all the intersections of $h$ of these hypersurfaces. That is,
	$$\VV_{h,\H} = \bigcup_{1 \leqslant i_1 < \dots < i_h \leqslant n} H_{i_1} \cap \dots \cap H_{i_h}.$$
We call $\VV_{h,\H}$ a \emph{codimension $h$ star configuration}.
\end{definition}

Suppose that for $i = 1, \dots, n$, the hypersurface $H_i$ is defined by the homogeneous polynomial $F_i$. Set $\F = \{F_1, \dots, F_n\}$. Then, the defining ideal of $\VV_{h,\H}$ is given by
$$I_{h,\F} = \bigcap_{1 \leqslant i_1 < \dots < i_h \leqslant n} (F_{i_1}, \dots, F_{i_h}).$$
We refer to  $I_{h,\F}$ as the defining ideal of a codimension $h$ star configuration in $\PP^N_\kk$. Note that, since $h \leqslant N$, it further follows from the definition that any $(h+1)$ elements in $\F$ form a \emph{complete intersection}. A recent result of Mantero \cite[Theorem 4.9]{Mantero} shows that if, in addition, $h < n$, then $I_{h,\F}$ is minimally generated by appropriate \emph{monomials} in the homogeneous forms of $\F$.

\begin{theorem} \label{thm.StarConf}
Let $\kk$ be a field. Let $I$ be the defining ideal of a codimension $h$ star configuration in $\PP^N_\kk$, for some $h \leqslant N$. For any $m, r, c \geqslant 1$, we have
$$I^{(r(m+h-1)-h+c)} \subseteq \mm^{(r-1)(h-1)+c-1}(I^{(m)})^r.$$
\end{theorem}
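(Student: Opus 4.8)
The plan is to reduce the containment to a purely combinatorial statement about exponent vectors, using Mantero's description of the symbolic powers of a star configuration \cite[Theorem 4.9]{Mantero}. If $h = n$ then $I$ is a complete intersection, $I^{(t)} = I^t$, and the assertion reads $I^{\,rm+D} \subseteq \mm^D I^{\,rm}$ with $D = (r-1)(h-1)+c-1$, which is immediate; so I assume $h < n$. Write $F^{\a} = F_1^{a_1}\cdots F_n^{a_n}$ for $\a = (a_1,\dots,a_n) \in \NN^n$, and set $\beta(\a) = \min_{|S|=h}\sum_{i\in S}a_i$, the sum of the $h$ smallest entries of $\a$. By Mantero's theorem, $I^{(t)}$ is generated by the monomials $F^{\a}$ with $\beta(\a) \geqslant t$. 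Since $\deg F^{\a} = \sum_i a_i\deg F_i \geqslant \sum_i a_i$ and $F^{\a}F^{{\bf b}} = F^{\a+{\bf b}}$, it suffices to prove the following, writing $|{\bf c}|$ for $\sum_i c_i$: \emph{if $\a\in\NN^n$ satisfies $\beta(\a) \geqslant r(m+h-1)-h+c$, then $\a = {\bf b}_0+{\bf b}_1+\cdots+{\bf b}_r$ for some ${\bf b}_0,\dots,{\bf b}_r\in\NN^n$ with $\beta({\bf b}_j)\geqslant m$ for $1\leqslant j\leqslant r$ and $|{\bf b}_0|\geqslant (r-1)(h-1)+c-1$.}

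Then I would make two reductions. First, one may assume every entry of $\a$ is positive: if $\a$ has $z$ zero entries then $z\leqslant h-1$ (as $\beta(\a)\geqslant m\geqslant 1$), and passing to the subvector of positive entries replaces $(h,c)$ by $(h-z,\,c+(r-1)z)$; a direct check shows that $r(m+h-1)-h+c$, that $(r-1)(h-1)+c-1$, and the value of $\beta$ all transform so that the statement for the subvector yields the original one. Second, one may assume $c=1$: if $c\geqslant 2$ and $\a\neq{\bf 1}$ (the case $\a={\bf 1}$ being immediate, since then necessarily $r=1$), subtracting $1$ from a largest entry of $\a$ lowers $\beta$ by at most $1$, so the $(c-1)$-case applies to the new vector, and the removed unit is returned to ${\bf b}_0$.

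It then remains to handle $c=1$: after sorting $a_1\geqslant\cdots\geqslant a_n\geqslant 1$ we have $\beta(\a) = a_{n-h+1}+\cdots+a_n \geqslant rm+(r-1)(h-1)$. The idea is to let each ${\bf b}_j$ ($1\leqslant j\leqslant r$) be a copy, adapted to the shape of $\a$, of a minimal-degree generator pattern of $I^{(m)}$ — by Mantero's description such a generator carries the value $\lceil m/h\rceil$ on all but $h-1$ of the coordinates and a nearly flat partition of the remaining deficit on those $h-1$ coordinates — and, crucially, to \emph{rotate} which coordinates receive the small values as $j$ runs from $1$ to $r$, so that no coordinate of $\a$ is over-drawn by $\sum_{j=1}^r{\bf b}_j$. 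The residue ${\bf b}_0 = \a - \sum_{j=1}^r{\bf b}_j$ collects what is left, and the bound $|{\bf b}_0|\geqslant (r-1)(h-1)$ is read off from the construction, with the elementary inequality $|\a|\geqslant \tfrac nh\,\beta(\a)$ providing the slack.

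The hard part will be exactly this construction: the vectors ${\bf b}_1,\dots,{\bf b}_r$ must be chosen so that \emph{simultaneously} (i) $\sum_{j=1}^r{\bf b}_j\leqslant\a$ coordinatewise — the delicate constraint being on the smallest coordinates of $\a$, where there is least room; (ii) $\beta({\bf b}_j)\geqslant m$ for every $j$, which forces each ${\bf b}_j$ to be supported on at least $n-h+1$ coordinates; and (iii) the leftover ${\bf b}_0$ is as large as required. That $\beta(\a)\geqslant rm$ by itself does not suffice for (i)--(ii) — already for $\a=(5,5,5)$, $h=2$, $m=3$, $r=2$ one cannot peel off the ${\bf b}_j$ one at a time — is what makes the rotation necessary. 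This bookkeeping is elementary but somewhat intricate, and I would carry it out along the lines of \cite{CEHH2017}, where the analogous containment is established for squarefree monomial ideals. The same scheme handles generic determinantal ideals as well, with Mantero's description of $I^{(m)}$ replaced by the description of the symbolic powers of $I_t(X)$ due to DeConcini--Eisenbud--Procesi \cite{DEPdeterminantal}.
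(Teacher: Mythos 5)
Your reduction, via Mantero's description, to a purely combinatorial statement about exponent vectors is exactly the right move, and it is also how the paper proceeds (following the squarefree monomial argument of \cite{CEHH2017}). But the proposal stops precisely where the mathematical content lies: the existence of the decomposition $\mathbf{a} = \mathbf{b}_0 + \mathbf{b}_1 + \cdots + \mathbf{b}_r$ in the core case is never established. You describe a ``rotation'' scheme, acknowledge that satisfying (i)--(iii) simultaneously is ``the hard part,'' and defer the bookkeeping to an argument you do not carry out; nothing in the sketch shows the rotated blocks can always be fitted under $\mathbf{a}$ coordinatewise while each keeps $\beta \geqslant m$ and the leftover keeps the required size. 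As written this is a plan, not a proof, and the preliminary reductions (to strictly positive entries, to $c=1$), while they check out, do not touch the real difficulty.

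Moreover, no rotation is needed: the paper produces $r$ \emph{equal} blocks. Set $d_j = \lfloor a_j/r \rfloor$. Since $\sum_{j\in E} a_j \geqslant r(m+h-1)-h+c$ for every $h$-set $E$ and $a_j - d_j r \leqslant r-1$, one gets $r\sum_{j\in E} d_j \geqslant r(m-1)+c$, and divisibility by $r$ upgrades this to $\sum_{j\in E} d_j \geqslant m$ for every $E$. Now decrease the $d_j$ to $d_j'$ so that all inequalities $\sum_{j\in E} d_j' \geqslant m$ persist but some $E_\ell$ attains equality; then $\mathbf{b}_1 = \cdots = \mathbf{b}_r = \mathbf{d}'$ works, the leftover $\mathbf{b}_0 = \mathbf{a} - r\mathbf{d}'$ is nonnegative because $d_j' \leqslant \lfloor a_j/r \rfloor$, and $|\mathbf{b}_0| \geqslant \sum_{j\in E_\ell} a_j - rm \geqslant r(m+h-1)-h+c-rm = (r-1)(h-1)+c-1$. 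This handles all $c \geqslant 1$ at once, making your reduction chain unnecessary. Incidentally, your claimed obstruction $\mathbf{a}=(5,5,5)$, $h=2$, $m=3$, $r=2$ is not one: $(2,2,1)+(2,2,1) \leqslant (5,5,5)$ with each block having $\beta = 3$ and leftover $(1,1,3)$, which is exactly what the equal-block construction yields; so the motivation you give for the rotation does not hold up, and the construction you would need to justify it is the missing piece.
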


\begin{proof} 
For a complete intersection, symbolic and ordinary powers are equal. Thus, the statement is trivial if $h = n$. Assume that $h < n$. Let $\F = \{F_1, \dots, F_n\}$ be the collection of homogeneous forms which defines the given star configuration. By definition, we have
\begin{align} 
I = \bigcap_{1 \leqslant i_1 < i_2 < \dots < i_h \leqslant n} (F_{i_1}, \dots, F_{i_h}). \label{eq.primaryD}
\end{align}
We shall proceed following a similar argument to the one in \cite[Theorem 4.2]{CEHH2017}, where the containment was proved for squarefree monomial ideals. 

Denote the ideals of the form $(F_{i_1}, \dots, F_{i_h})$ in (\ref{eq.primaryD}), in some order, by $Q_1,...,Q_s$, where $s= {n \choose h}$ and each $Q_i$ is of the form $Q_i = (F_{i_1}, \dots, F_{i_h})$. 
Let $E_i$ denote the set of indices of the elements from $\F$ appearing in $Q_i$, namely $E_i=\{i_1,...,i_h\}$. It follows from \cite[Proposition 2.4]{Mantero} (see also \cite{GHM2013}) that
\begin{align}
I^{(r(m+h-1)-h+c)}=\bigcap_{i=1}^s Q_i^{r(m+h-1)-h+c}. \label{eq.symbolicSC}
\end{align}
By \cite[Theorem 4.9]{Mantero}, $I^{(r(m+h-1)-h+1)}$ is generated by \emph{monomials} in the forms of $\mathcal{F}$. Consider an arbitrary such monomial $M = F_1^{a_1} \dots F_s^{a_s}$ in $I^{(r(m+h-1)-h+c)}$. 

Observe that if $j \not\in E_i = \{i_1, \dots, i_h\}$ then $\{F_j, F_{i_1}, \dots, F_{i_h}\}$ form a complete intersection. This implies that $F_j$ is not a zero-divisor of $Q_i^k$ for all $k \geqslant 1$. Thus, it follows from (\ref{eq.symbolicSC}) that, for each $i = 1, \dots, s$, we have
	$$\sum_{j\in E_i}a_j \geqslant r(m+h-1)-h+c.$$ 
Let $d_j \in \mathbb{Z}_{\geqslant 0}$ be such that $d_jr \leqslant a_j < (d_j+1)r$ and set $a_j' =a_j-d_jr \leqslant r-1$ (in particular, $\sum_{j \in E_i} a_j'  \leqslant (r-1)h$). It can be seen that
	$$\sum_{j \in E_i} d_jr = \sum_{j \in E_i} (a_j - a_j') \geqslant r(m+h-1) -h+c - (r-1)h = r(m-1)+c.$$ 
The left hand side of this inequality is divisible by $r$, so we deduce that
	$\sum_{j \in E_i} d_jr \geqslant rm.$
In particular, we have $\sum_{j\in E_i} d_j \geqslant m$.

Consider the system of inequalities $\left\{ \sum_{j\in E _i } d_j \geqslant m ~\big|~ i=1,\dots, s\right\}.$
	By successively reducing the values of $d_j$'s  we can choose 
	$ 0 \leqslant d_j' \leqslant d_j$ such that the system of inequalities 
	$$\left\{ \sum_{j\in E _i } d_j ' \geqslant m ~\Big|~ i=1,\dots, s\right\}$$
is still satisfied, but for at least one value of $1 \leqslant \ell \leqslant s$ we obtain the equality $\sum_{j\in E_\ell} d_j'=m$.

Set $ f= \prod_{j=1}^n F_j^{d_j'r} \text{ and } g = \prod_{j=1}^n F_j^{a_j-d_j'r}.$ Then, $M = fg$. Also, it follows from (\ref{eq.symbolicSC}) that $\prod_{j=1}^n F_j^{d_j'} \in I^{(m)}$. Thus, $f \in (I^{(m)})^r$.

On the other hand, it is easy to see that
\begin{align*}
\deg g & \geqslant \sum_{j \in E_\ell} (a_j - d_j'r)  \\
& = \sum_{j \in E_\ell} a_j - (\sum_{j \in E_\ell} d_j')r \\
& \geqslant r(m+h-1)-h+c - rm  \\
& = (r-1)(h-1)+c-1.
\end{align*}
Therefore, $g \in \mm^{(r-1)(h-1)+c-1}$. Hence, $M \in \mm^{(r-1)(h-1)+c-1}(I^{(m)})^r$, and the result follows.
\end{proof}

As a corollary of Theorem \ref{thm.StarConf}, we can show that Demailly's bound holds for a star configuration in $\PP^N_\kk$. 

\begin{corollary}
	\label{cor.DemaillySC}
	Let $I$ be the defining ideal of a codimension $h$ star configuration in $\PP^N_\kk$, for some $h \leqslant N$. For any $m, r \in \NN$, we have
	$$I^{(r(m+h-1))} \subseteq \mm^{r(h-1)}(I^{(m)})^r.$$
	In particular, Demailly-like bound holds for defining ideals of star configurations in $\PP^N_\kk$.
\end{corollary}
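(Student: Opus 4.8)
The plan is to obtain the stated containment as the special case $c = h$ of Theorem~\ref{thm.StarConf}, and then to read off the Demailly-like bound by a routine asymptotic argument with the Waldschmidt constant. In particular, the corollary should require essentially no new work beyond a careful substitution.

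First I would substitute $c = h$ (legitimate since a star configuration has codimension $h \geqslant 1$) into the conclusion of Theorem~\ref{thm.StarConf}. On the left, the exponent of the symbolic power becomes $r(m+h-1) - h + h = r(m+h-1)$; on the right, the exponent of $\mm$ becomes $(r-1)(h-1) + (h-1) = r(h-1)$. Hence Theorem~\ref{thm.StarConf} yields exactly
$$I^{(r(m+h-1))} \subseteq \mm^{r(h-1)}(I^{(m)})^r \qquad \text{for all } m,r \geqslant 1,$$
which is the first assertion. For the Demailly-like bound, I would use that $R = \kk[\PP^N_\kk]$ is a domain, so $\alpha$ is additive on products of homogeneous ideals; in particular $\alpha\big(\mm^{r(h-1)}(I^{(m)})^r\big) = r(h-1) + r\,\alpha(I^{(m)})$. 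Combining this with the containment just obtained gives, for every $r \geqslant 1$,
$$\frac{\alpha\big(I^{(r(m+h-1))}\big)}{r(m+h-1)} \geqslant \frac{r(h-1) + r\,\alpha(I^{(m)})}{r(m+h-1)} = \frac{\alpha(I^{(m)}) + h - 1}{m+h-1}.$$
Since the Waldschmidt constant exists and satisfies $\ahat(I) = \lim_{n \to \infty} \alpha(I^{(n)})/n = \inf_{n} \alpha(I^{(n)})/n$ (as recalled in Section~\ref{sec.Demailly}), letting $r \to \infty$ along the subsequence $n = r(m+h-1)$ gives $\ahat(I) \geqslant \big(\alpha(I^{(m)}) + h - 1\big)/(m+h-1)$, and then $\alpha(I^{(n)})/n \geqslant \ahat(I)$ for every $n$ finishes the argument.

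I do not expect a genuine obstacle here; the only points that need a little care are the bookkeeping in the substitution $c = h$ and the (automatic) use of additivity of $\alpha$, which requires $R$ to be a domain. One could instead try to route the last step through Lemma~\ref{lem.14all}, but its hypothesis $I^{(c(h+m-1)-h+1)} \subseteq \mm^{c(h-1)}(I^{(m)})^c$ is formally stronger than what Theorem~\ref{thm.StarConf} supplies at that exponent, so the direct limiting argument above is the cleaner route.
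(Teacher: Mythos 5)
Your proposal is correct and follows essentially the same route as the paper: substitute $c = h$ into Theorem~\ref{thm.StarConf} to get the containment, then divide degrees and let $r \to \infty$ to obtain the Waldschmidt-constant bound, from which the Demailly-like inequality follows since $\alpha(I^{(n)})/n \geqslant \ahat(I)$ for all $n$. Your bookkeeping for the substitution and the degree estimate $\alpha\bigl(\mm^{r(h-1)}(I^{(m)})^r\bigr) = r(h-1) + r\,\alpha(I^{(m)})$ match the paper's argument exactly.
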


\begin{proof} 
The first statement is a consequence of Theorem \ref{thm.StarConf} by setting $c = h$. The second statement follows immediately from the containment $I^{(r(m+h-1))} \subseteq \mm^{r(h-1)}(I^{(m)})^r$, which implies that 
	$$\dfrac{\alpha(I^{(r(m+h-1))})}{r(m+h-1)} \geqslant \dfrac{r(h-1) + r\alpha(I^{(m)})}{r(m+h-1)} = \dfrac{\alpha(I^{(m)})+h-1}{m+h-1},$$
and by taking the limit as $r \rightarrow \infty$.
\end{proof}

%%%%%%%%%%%%%%%%%%%%%%%%%%

\subsection{Generic determinantal ideals}

We now show that the Harbourne--Huneke Containment in Question \ref{question demailly containment general h} holds for generic determinantal ideals. We first prove in Theorem \ref{determinantal case thm} the precise containment stated in Question \ref{question demailly containment general h} to better illustrate our techniques. A more general containment, as in Theorem \ref{thm.StarConf}, will then be shown to hold in Remark \ref{rmk.HHgeneralDI}.

\begin{theorem}[Generic determinantal ideals]\label{determinantal case thm}
Let $\kk$ be a filed. For fixed positive integers $t \leqslant \min\{p,q\}$, let $X$ be a $p \times q$ matrix of indeterminates, let $R = \kk[X]$, and let $I = I_t(X)$ denote the ideal of $t$-minors of $X$. Let $h = (p-t+1)(q-t+1)$ be the height of $I$ in $R$. For all $m,r, c \geqslant 1$, we have
	For any $m, r, c \geqslant 1$, we have
	
	$$I^{(r(h+m-1))} \subseteq \mm^{r(h-1)} \left( I^{(m)}\right)^r.$$
In particular, Demailly-like bound holds for generic determinantal ideals.
\end{theorem}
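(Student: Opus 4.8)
The plan is to follow the proof of Theorem~\ref{thm.StarConf}, replacing Mantero's description of symbolic powers by the one for $I = I_t(X)$ due to DeConcini--Eisenbud--Procesi \cite{DEPdeterminantal}: for every $n \geqslant 1$, the symbolic power $I^{(n)}$ is generated by the products $\delta_1 \cdots \delta_w$ of minors of $X$ satisfying $\sum_{i} \gamma(\delta_i) \geqslant n$, where $\gamma(\delta) := \max\{0, |\delta| - t + 1\}$ and $|\delta|$ denotes the size of $\delta$; moreover such a product already lies in $I^{(m)}$ once $\sum_i \gamma(\delta_i) \geqslant m$. I will establish the more general containment $I^{(r(m+h-1)-h+c)} \subseteq \mm^{(r-1)(h-1)+c-1}(I^{(m)})^r$ of Remark~\ref{rmk.HHgeneralDI}; the statement displayed above is its $c = h$ case. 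If $t = 1$ (so $I = \mm$) or $p = q = t$ (so $I$ is principal), symbolic and ordinary powers coincide and both sides collapse to equal powers of $\mm$ times $I^{rm}$, so I assume $t \geqslant 2$ and $h \geqslant 2$. Write $\gamma_{\max} := \min\{p,q\} - t + 1$ for the largest possible weight of a single minor, and note the elementary inequality $h = (p-t+1)(q-t+1) \geqslant \gamma_{\max}$.

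Put $N := r(m+h-1)-h+c$ and $D := (r-1)(h-1)+c-1$, so $N = rm + D$ and $D \geqslant (r-1)(h-1) \geqslant (r-1)(\gamma_{\max}-1)$. After deleting any factors of size $< t$ (each lies in $\mm$, so this only helps), it suffices to prove that every product $M = \delta_1 \cdots \delta_w$ of minors of size $\geqslant t$ with total weight $W := \sum_i \gamma(\delta_i) \geqslant N$ lies in $\mm^D (I^{(m)})^r$. First I distribute the minors greedily: processing $\delta_1, \dots, \delta_w$ in any order, I fill bins $B_1, \dots, B_r$, adding minors to the current bin until its total weight first reaches $m$ or more, at which point I close it and pass to the next; every minor encountered after all $r$ bins are closed is placed in a leftover pile $B_0$. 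The inequality $D \geqslant (r-1)(\gamma_{\max}-1)$ ensures that the available weight $W \geqslant rm + D$ always suffices to close all $r$ bins. Writing $\mathrm{wt}(B_k) = m + e_k$ with $e_k \geqslant 0$, the last minor $\delta_k^\ast$ added to $B_k$ increased its weight from a value $\leqslant m-1$ to $m + e_k$, so $\gamma(\delta_k^\ast) \geqslant e_k + 1$ and $|\delta_k^\ast| = \gamma(\delta_k^\ast) + t - 1 \geqslant e_k + t$.

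Next I absorb the overshoot using cofactor (Laplace) expansion. For each bin with $e_k > 0$, apply iterated row expansion $e_k$ times to $\delta_k^\ast$, writing it as an $\mm^{e_k}$-linear combination of minors of size $|\delta_k^\ast| - e_k \geqslant t$, hence of weight $\gamma(\delta_k^\ast) - e_k \geqslant 1$. Substituting these identities into $M$ rewrites $M$ as a sum of terms, each being a monomial of degree $E := \sum_k e_k$ in the entries of $X$, times the product of the $B_0$-minors, times the $r$-fold product whose $k$th factor is the product of the (now adjusted) minors of $B_k$ --- and each adjusted bin has total weight exactly $m$. As $\mm^D(I^{(m)})^r$ is an ideal, it is enough to place each term in it: the $r$-fold product lies in $(I^{(m)})^r$ by \cite{DEPdeterminantal}; the monomial lies in $\mm^E$; and the $B_0$-product lies in $\mm^{d}$ where $d := \sum_{\delta \in B_0} |\delta|$. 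Since $d \geqslant \mathrm{wt}(B_0) = W - (rm + E)$, we get $E + d \geqslant W - rm \geqslant N - rm = D$, so the term lies in $\mm^{E+d}(I^{(m)})^r \subseteq \mm^D(I^{(m)})^r$. This proves the containment.

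The Demailly-like bound then follows formally, just as in Corollary~\ref{cor.DemaillySC}: taking $c = h$ gives $I^{(r(m+h-1))} \subseteq \mm^{r(h-1)}(I^{(m)})^r$, whence $\alpha(I^{(r(m+h-1))}) \geqslant r(h-1) + r\,\alpha(I^{(m)})$; dividing by $r(m+h-1)$ and letting $r \to \infty$ yields $\ahat(I) \geqslant \tfrac{\alpha(I^{(m)}) + h - 1}{m + h - 1}$, and this bounds $\alpha(I^{(n)})/n$ for every $n$ because $\ahat(I) = \inf_n \alpha(I^{(n)})/n$. The step I expect to be the main obstacle, and the one that genuinely departs from \cite{CEHH2017} and Theorem~\ref{thm.StarConf}, is the expansion step: there the atoms (variables, resp.\ forms $F_i$) have weight one, so the partition into $r$ groups can be arranged with no overshoot and nothing is expanded, whereas here a minor is an indivisible atom of weight up to $\gamma_{\max}$, and the new idea is that the inevitable overshoot in weight can be converted, via cofactor expansion, into exactly the same amount of degree lying in $\mm$ --- after which the bookkeeping closes on the nose. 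The delicate verifications are that the last minor of each overshooting bin is large enough for the required number of expansions, and that the factors produced remain honest minors of size $\geqslant t$, so the bins really do retain weight $m$.
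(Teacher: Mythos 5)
Your proposal is correct and follows essentially the same route as the paper's proof: the DeConcini--Eisenbud--Procesi description of $I^{(k)}$, a greedy extraction of $r$ groups of minors each reaching weight $m$, absorption of each group's overshoot by Laplace (cofactor) expansion of its last minor into $\mm$-coefficients times smaller minors of size $\geqslant t$, and a final degree count on the leftover factors. The only (minor) difference is packaging: you prove the stronger containment of Remark \ref{rmk.HHgeneralDI} uniformly for all $r\geqslant 1$ with the single feasibility check $D \geqslant (r-1)(\gamma_{\max}-1)$, whereas the paper proves the $c=h$ case first and handles $r=1$ and $r\geqslant 2$ separately in the remark.
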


\begin{proof} 
After possibly replacing $X$ with its transpose, we may assume that $p \geqslant q$. We will use the explicit description of the symbolic powers of the ideals of minors of generic determinantal matrices by Eisenbud, DeConcini, and Procesi \cite{DEPdeterminantal}. We point the reader to \cite[10.4]{determinantalrings} for more details on the subject.
	
	Given a product $\delta = \delta_1 \cdots \delta_u$, where $\delta_i$ is an $s_i$-minor of $X$, $\delta \in I^{(k)}$ if and only if 
	$$\sum^{u}_{i=1} \max \lbrace 0, s_i - t + 1 \rbrace \geqslant k,$$
	and $I^{(k)}$ is generated by such products. Fix such a product $\delta = \delta_1 \cdots \delta_u \in I^{(r(h+m-1))}$, and set $s \colonequals s_1 + \cdots + s_u$. Note that by the rule above, whether or not $\delta$ is in a particular symbolic power of $I$ is unchanged if some $s_i < t$, so we can assume without loss of generality that all $s_i \geqslant t$. We thus have
	$$\sum_{i=1}^u (s_i-t+1) \geqslant r(h+m-1).$$
	We will explicitly write $\delta$ as an element in $( I^{(m)} )^r$. First, note that to write $\delta$ as an element in $I^{(m)}$, it is enough to find a subset of $\lbrace \delta_1, \ldots, \delta_u \rbrace$, say $\delta_1, \ldots, \delta_v$, such that
	$$\sum_{i=1}^v (s_i-t+1) \geqslant m.$$
	If we chose $\delta_1, \ldots, \delta_v$ the best way possible, meaning such that no $\delta_i$ can be deleted, then 
	$$\sum_{i=1}^{v-1} (s_i-t+1) \leqslant m-1,$$ 
	and since $\delta_i$ is a minor of $X$, we must have $s_i \leqslant q$. Thus
	$$\sum_{i=1}^v (s_i-t+1) \leqslant (m-1) + (q-t+1) = m+q-t.$$
	Suppose that 
	$$\sum_{i=1}^v (s_i-t+1) = m+k.$$
	Then $s_v-t +1 \geqslant (m+k) - (m-1) = k+1$, so $s_v \geqslant t+k$. By using a Laplace expansion, we can rewrite $\delta_v$ as a linear combination of minors of size $s_v-k$ with coefficients in $\mm^k$. On the other hand, the product of $\delta_1 \cdots \delta_{v-1}$ by a minor of size $s_v - k \geqslant t$ is still in $I^{(m)}$, so $\delta_1 \cdots \delta_v \in \mm^k I^{(m)}$ for some $k \leqslant q-t$.
	
	By repeating this process $r$ times, we claim that we can extract $\delta_1, \ldots, \delta_w$ (perhaps after reordering) such that 
	$$\delta_1 \cdots  \delta_w \in \mm^{k_1 + \cdots + k_r}\left( I^{(m)} \right)^r \textrm{ and }  \sum_{i=1}^w (s_i - t + 1) = rm + \sum_{j=1}^r k_j\leqslant r (m+q-t).$$
	This is always possible as long as
	$$\sum_{i=1}^w (s_i-t+1) \leqslant \sum_{i=1}^u (s_i-t+1),$$
	which follows from
	$$q-t+1 \leqslant h \implies rm + \sum_{j=1}^r k_j\leqslant r (m+q-t) \leqslant r(h+m-1).$$
	
	As a consequence, the remaining factors in $\delta$ satisfy
	$$\sum_{i=w+1}^u (s_i - t + 1) = \sum_{i=w+1}^u (s_i - t + 1) - r m - \sum_{j=1}^r k_j \geqslant r(h+m-1) - r m - \sum_{j=1}^r k_j.$$
	Since each $s_i$-minor $\delta_i$ has degree $s_i$, and
	$$\sum_{i=w+1}^u s_i \geqslant \sum_{i=w+1}^u (s_i - t + 1) \geqslant r(h-1) - \sum_{j=1}^r k_j,$$
	we conclude that $\delta_{w+1} \cdots \delta_u \in \mm^N$, where $N \geqslant r(h-1) - (k_1 + \cdots + k_r)$.
	Therefore,
	$$\delta = \delta_{1} \cdots \delta_w \delta_{w+1} \cdots \delta_u \in \mm^{k_1 + \cdots + k_m + N} \left( I^{(m)} \right)^r \subseteq \mm^{r(h-1)} \left( I^{(m)} \right)^r.\qedhere$$
\end{proof}

\begin{remark} \label{rmk.HHgeneralDI}
	Since we showed that star configurations satisfy the stronger containment
	$$I^{(r(m+h-1)-h+c)} \subseteq \mm^{(r-1)(h-1)+c-1}(I^{(m)})^r,$$
	it is natural to ask if so do generic determinantal ideals. We shall prove that this stronger inclusion also holds.
	
	We will use the same notation as in the proof of Theorem \ref{determinantal case thm} and assume that $p \geqslant q$. When $t = q$ or, equivalently, when $I$ is the ideal of maximal minors of $X$, it is well-known that $I^{(n)} = I^n$ for all $n \geqslant 1$ --- see \cite[(2.2)]{HochsterCriteria} for the case when $q = p+1$, while the general case follows immediately from \cite{DEPdeterminantal} --- and there is nothing to show. So we will assume that $q > t$, and in particular, we have $p-t+1 \geqslant q-t+1 \geqslant 2$.
	
	When $r=1$, the asserted statement is that
	$$I^{(m+c-1)} \subseteq \mm^{c-1}I^{(m)}$$
	for all $c, m \geqslant 1$, which follows immediately once one shows that $I^{(m+1)} \subseteq \mm I^{(m)}$ for all $m \geqslant 1$. In characteristic $0$, this is given in \cite[Proposition]{EisenbudMazur}; we claim that $I^{(m+1)} \subseteq \mm I^{(m)}$ holds in any characteristic. Consider a product of $s_i$-minors $\delta = \delta_1 \cdots \delta_u \in I^{(m+1)}$ such that $s_i \geqslant t$. If $s_i = t$ for all $i$, then $u \geqslant m+1$, and so we have $\delta_1 \cdots \delta_{u-1} \in I^m \subseteq I^{(m)}$, which implies that $\delta \in \mm I^{(m)}$. If, otherwise, there exists $i$ such that $s_i \geqslant t+1$ then using Laplace expansion, $\delta_i$ can be rewritten as a linear combination of minors of size $s_i-1 \geqslant t$ with coefficients in $\mm$, so that $\delta = \delta_1 \ldots, \delta_u \in \mm I^{(m)}$.
	
	When $r \geqslant 2$, we claim that we can adapt the proof of Theorem \ref{determinantal case thm} to show 
	$$I^{(r(m+h-1)-h+c)} \subseteq \mm^{(r-1)(h-1)+c-1}(I^{(m)})^r,$$
	but we need a bit more work. First, let us try to follow the same steps: we fix a product $\delta = \delta_1 \cdots \delta_u \in I^{(r(h+m-1)-h+c)}$, and start by writing it as a multiple of a product of minors in $\mm^{k_1 + \cdots + k_r} (I^{(m)})^r$ for some $k_j \leqslant q-t$. We started off with 
	$$\sum_{i=1}^u (s_i-t+1) \geqslant r(h+m-1) - h + c$$
	and we need to collect $r$ subsets of $v$ many $\delta_i$ such that
	$$\sum_{i=1}^v (s_i-t+1) \geqslant m,$$
	but all we can guarantee is that
	$$\sum_{i=1}^w (s_i - t + 1) = rm + \sum_{j=1}^r k_j \leqslant r (m+q-t).$$
	This is only possible if 
	$$r(h+m-1) - h + c - r (m+q-t) \geqslant 0.$$
	We claim this inequality does hold given our assumptions that $r \geqslant 2$ and $p \geqslant q > t$. Indeed, writing $Q \colonequals q-t+1 \geqslant 2$ and $P \colonequals p-t+1 \geqslant 2$, we get
	\begin{align*}
	r(h+m-1) - h + c - r (m+q-t) & = r(h-(q-t+1)) - h + c\\
	& = r(PQ - Q) - PQ + c \\ 
	& = PQ(r-1) - rQ + c \\
	& \geqslant 2Q(r-1) - rQ + c \\
	& = (r-2)Q +c \\
	& \geqslant 0.
	\end{align*}
	As in the proof of Theorem \ref{determinantal case thm}, write $\delta = \delta_{1} \cdots \delta_w \delta_{w+1} \cdots \delta_u$ where $\delta_{1} \cdots \delta_w \in \mm^{k_1 + \cdots + k_r} \left( I^{(m)} \right)^r$; the remaining factors in $\delta$ satisfy
	$$\sum_{i=w+1}^u (s_i - t + 1) = \sum_{i=w+1}^u (s_i - t + 1) - r m - \sum_{j=1}^r k_j \geqslant r(h+m-1) - r m -h +c - \sum_{j=1}^r k_j.$$
	Since each $s_i$ minor $\delta_i$ has degree $s_i$, and
	$$\sum_{i=w+1}^u s_i \geqslant \sum_{i=w+1}^u (s_i - t + 1) \geqslant r(h-1) -h+c - \sum_{j=1}^r k_j = (r-1)(h-1)+c-1 - \sum_{j=1}^r k_j,$$
	we conclude that 
	$$\delta = \delta_{1} \cdots \delta_w \delta_{w+1} \cdots \delta_u \in \mm^{(r-1)(h-1)+c-1} I^{(m)}.\qedhere$$
\end{remark}

\begin{remark} \label{rmk.gensym}
	Theorem \ref{determinantal case thm} also holds for determinantal ideals of \emph{symmetric} matrices. Let $t \leqslant p$ be integers, let $Y$ be a $p \times p$ symmetric matrix of indeterminates, meaning $Y_{ij} = Y_{ji}$ for all $1 \leqslant i,j \leqslant p$, and let $I = I_t(Y)$ be the ideal of $t$-minors of $Y$ in $R = \kk[Y]$. By \cite[Proposition 4.3]{MultClassicalVar}, given $s_i$-minors $\delta_i$ of $Y$, the product $\delta = \delta_1 \cdots \delta_u$ is in $I^{(k)}$ if and only if $\sum^{u}_{i=1} \max \lbrace 0, s_i - t + 1 \rbrace \geqslant k$, and $I^{(k)}$ is generated by such elements. The element $\delta = \delta_1 \cdots \delta_u$ has once again degree $s_1 + \cdots + s_u$, and $I$ is a prime of height $h = {p - t + 2 \choose 2} = \frac{(p-t+1)(p-t+2)}{2}$.
	
	Fix $r, m \geqslant 1$. As we did in the case of generic matrices, we start with a product of $s_i$-minors $\delta_i$, say $\delta = \delta_1 \cdots \delta_u \in I^{(r(h+m-1))}$. We again pick aside some of those terms, say $\delta_1, \ldots, \delta_w$, in such a way that we can guarantee $\delta_1 \cdots \delta_w \in \mm^{k_1 + \cdots + k_r} \left( I^{(m)} \right)^r$, where each $k_j \leqslant p-t$. For this to be possible, we again need to check
	$$\sum_{i=1}^w (s_i-t+1) \leqslant \sum_{i=1}^u (s_i-t+1),$$
	which reduces to the same inequality as in Theorem \ref{determinantal case thm}, now with $p=q$:
		$$p-t \leqslant h \implies rm + \sum_{j=1}^r k_j\leqslant r (m+p-t) \leqslant r(h+m-1).$$
	The rest of the proof is exactly the same.\\
	
	Furthermore, by adapting the same argument as in Remark \ref{rmk.HHgeneralDI}, we can show that the containment
	$$I^{(r(m+h-1)-h+c)} \subseteq \mm^{(r-1)(h-1)+c-1}(I^{(m)})^r$$
	also holds. When $r=1$, the proof is exactly the same as in Remark \ref{rmk.HHgeneralDI}. When $r \geqslant 2$, we start with $\delta = \delta_1 \cdots \delta_u \in I^{(r(h+m-1)-h+c)}$, and need to guarantee that it is possible to pick $\delta_1, \ldots, \delta_w$ such that $\delta_1 \cdots \delta_w \in \mm^{k_1 + \cdots + k_m} \left( I^{(m)} \right)^r$, where each $k_j \leqslant p-t$. As before, we need to check that 
	$$\sum_{i=1}^w (s_i-t+1) \leqslant \sum_{i=1}^u (s_i-t+1),$$
	which reduces to checking that
	$$r (m+p-t) \leqslant r(h+m-1)-h+c.$$
	Writting $P \colonequals p-t+1$, we have
	$$r(h+m-1) - h + c - r (m+p-t) = r\left(\dfrac{P(P+1)}{2} - P\right) - \dfrac{P(P+1)}{2} + c .$$
	If $t=p$, $I$ is a principal ideal and the containment holds trivially. When $P \geqslant 2$, we have 
	$$r\left(\dfrac{P(P+1)}{2} - P\right) - \dfrac{P(P+1)}{2} = \dfrac{P}{2}(r(P-1)-(P+1)) = \dfrac{P}{2}((r-1)(P-1)-2) \geqslant 0,$$
	unless $r=2$ and $P=2$. The rest of the argument follows analogously to Remark \ref{rmk.HHgeneralDI}.

	It remains to check the claim in the case $r=2$ and $P=2$. Note that we now have $h=2$. Given $\delta = \delta_1 \cdots \delta_u \in I^{(2m+c)}$, we want to show show that $\delta \in \mm^{c} \left( I^{(m)} \right)^2$. As in the case of generic matrices, we can assume that each $s_i\geqslant t$, and since $p=t+1$, each $\delta_i$ is either a $t$-minor or a $(t+1)$-minor. Let $a$ be the number of $(t+1)$-minors and $b$ is the number of $t$-minors (so $a+b=u$). Since $\sum_{i=1}^u (s_i-t+1) \geqslant 2m+c$, we have $2a+b\geqslant 2m+c$. 
	
	If $m$ is even or $b \geqslant 2$, we can choose $a_1 + a_2 \leqslant a$ and $b_1 + b_2 \leqslant b$ such that $2a_1 + b_1 = m = 2a_2 + b_2$, so that $\delta$ can be written as a product of an element in $\left( I^{(m)} \right)^2$ and an element of degree 
	$$(t+1)(a-a_1-a_2) + t(b-b_1-b_2) \geqslant 2(a-a_1-a_2) + (b-b_1-b_2) \geqslant c.$$
	Therefore, $\delta \in \mm^{c} \left( I^{(m)} \right)^2$. When $b \leqslant 1$, we can reduce to the case $b \geqslant 2$ by viewing either one or two of our $(t+1)$ minors as a linear combination of $t$-minors with coefficients in $\m$.
\end{remark}

\begin{remark} \label{rmk.genskewsym}
	Theorem \ref{determinantal case thm} furthermore holds for pfaffian ideals of \emph{skew} symmetric matrices. Let $t \leqslant \frac{p}{2}$ be integers, let $Z$ be a $p \times p$ skew symmetric matrix of indeterminates, meaning $Z_{ij} = - Z_{ji}$ for $1 \leqslant i < j \leqslant p$ and $Z_{ii} = 0$ for all $i$, and let $I = P_{2t}(Z)$ be the $2t$-pfaffian ideal of $Z$. That is, $I$ is generated by the square roots of the $2t$ principal minors of $Z$. In this case, the symbolic powers of $I$ are generated by products of pfaffians; given $2 s_i$-pfaffians $\delta_i$, \cite[Theorem 2.1]{pfaffiansDN} and \cite[Proposition 4.5]{MultClassicalVar} tell us that $\delta_1 \cdots \delta_u \in I^{(k)}$ if and only if $\sum^{u}_{i=1} \max \lbrace 0, s_i - t + 1 \rbrace \geqslant k$ and $s_i \leqslant \left\lfloor \frac{p}{2} \right\rfloor$. Note also that a $2s_i$-pfaffian $\delta_i$ has degree $s_i$, and by \cite[Corollary 2.5]{pfaffians} the height of $I$ is now $h = {p-2t+2 \choose 2} = \frac{(p-2t+1)(p-2t+2)}{2}$.
	
	In the previous cases, we repeatedly used the fact that any $(t+k)$-minor is a linear combination of $t$-minors with coefficients in $\mm^k$, which is a direct consequence of Laplace expansion. In the case of pfaffians, \cite[Lemma 1.1]{pfaffians} says any skew symmetric matrix $M$ has a cofactor expansion of the form
	$$\textrm{pf } M = \sum_{j \geqslant 2} (-1)^j m_{1j} \textrm{pf } M_{1j}$$
	where $\textrm{pf } A$ denotes the pfaffian of $A$, meaning the square root of $\det(A)$, and $M_{1j}$ denotes the submatrix of $M$ obtained by deleting the first and $j$th rows and columns. As a consequence, any $2(t+k)$-pfaffian of our skew symmetric matrix $Z$ can be written as a linear combination of $2t$-pfaffians with coefficients in $\mm^k$.
	
	Fix $r, m \geqslant 1$. This time, we start with a product $\delta = \delta_1 \cdots \delta_u \in I^{(r(h+m-1))}$ of $2s_i$-pfaffians $\delta_i$, which must then satisfy
	$$\sum_{i=1}^u (s_i - t + 1) \geqslant r(h+m-1).$$
	As before, we start by collecting some of our pfaffians, say $\delta_1, \ldots, \delta_v$, in such a way that
	$$m + k_1 \colonequals \sum_{i=1}^v (s_i - t + 1) \geqslant m.$$
	We can assume 
	$$\sum_{i=1}^{v-1} (s_i - t + 1) \leqslant m-1 \implies s_v - t + 1 \geqslant k_1 +1 \implies k_1 \leqslant s_v - t \leqslant \frac{p-2t}{2}.$$
	Using the trick we described above, we can rewrite $s_v$ as a linear combination of $2t$-pfaffians with coefficients in $\mm^{k_1}$. Therefore, $\delta_1 \cdots \delta_v \in \mm^{k_1} I^{(m)}$. Repeating this idea $r$ many times, we can show that $\delta = \delta_1 \cdots \delta_w \cdots \delta_u$, where $\delta_1 \cdots \delta_w \in \mm^{k_1 + \cdots + k_r} \left( I ^{(m)} \right)^r$, as long as
	$$rm + \sum_{j=1}^r k_j \leqslant r(h+m-1).$$
	Each $k_j \leqslant \frac{p-2t}{2}$ and $h = \frac{(p-2t+1)(p-2t+2)}{2}$, so we need to check that
	$$rm + \frac{r(p-2t)}{2} \leqslant \frac{r(p-2t+1)(p-2t+2)}{2} + r(m-1),$$
	or equivalently,
	$$\frac{p-2t+2}{2} = 1 + \frac{p-2t}{2} \leqslant \frac{(p-2t+1)(p-2t+2)}{2}.$$
	And indeed, this holds, since $p-2t+1 \geqslant 1$. So we have shown that $\delta_1 \cdots \delta_w \in \mm^{k_1 + \cdots + k_r} \left( I ^{(m)} \right)^r$, and the remaining factors $\delta_{w+1} \cdots \delta_u$ satisfy
	$$\sum_{i=w+1}^u (s_i - t+ 1) \geqslant r(h+m-1) - rm - \sum_{j=1}^r k_j.$$
	Just as in the proof of Theorem \ref{determinantal case thm}, the degree of $\delta_{w+1} \cdots \delta_u$ is 
	$$\sum_{i=w+1}^u s_i \geqslant \sum_{i=w+1}^u (s_i - t+ 1) \geqslant r(h-1) - \sum_{j=1}^r k_j,$$
	so that $\delta \in \mm^{k_1 + \cdots k_r+N} \left(I^{(m)}\right)^r$ for $N \geqslant r(h-1) - (k_1 + \cdots + k_r)$. Therefore, $\delta \in \mm^{r(h-1)} \left( I^{(m)} \right)$.
	
	\
	
	Finally, we can once more show the stronger containment 
	$$I^{(r(m+h-1)-h+c)} \subseteq \mm^{(r-1)(h-1)+c-1}(I^{(m)})^r.$$
	As in the previous cases, the case $r=1$ follows from the Laplace expansion trick. When $r \geqslant 2$, we start with a product $\delta = \delta_1 \cdots \delta_u$ of $2s_i$-pfaffians $\delta_i$ satisfying
	$$\sum_{i=1}^u (s_i - t + 1) \geqslant r(m+h-1)-h+c$$
	and collect a subset $\delta_1 \cdots \delta_w \in \mm^{k_1 + \cdots + k_r} \left( I^{(m)} \right)^r$ with each $k_j \leqslant \frac{p-2t}{2}$. We can only do this if 
	$$r(m+h-1)-h+c \geqslant rm + \sum_{j=1}^r k_j,$$
	so it is sufficient to show
	$$(r-1)h +rm - r + c = r(m+h-1)-h+c \geqslant rm + \frac{r(p-2t)}{2}.$$
	Setting $P \colonequals p-2t$ so that $2h = (P+1)(P+2)$, we want to show that
	$$(r-1)(P+1)(P+2) - 2r + 2c - rP \geqslant 0,$$
	or equivalently,
	$$rP(P+2) + 2c \geqslant (P+1)(P+2).$$
	Whenever $r \geqslant 2$, we also have $rP \geqslant P+1$, and our desired inequality holds. So we have shown $\delta_1 \cdots \delta_w \in \mm^{k_1 + \cdots + k_r} \left( I^{(m)} \right)^r$, and $\delta_{w+1} \cdots \delta_u$ has degree
	$$N = \sum_{i=w+1}^u s_i \geqslant \sum_{i=w+1}^u (s_i-t+1) \geqslant r(m+h-1)-h+c -rm - \sum_{j=1}^r k_j,$$
	so $\delta \in \mm^{k_1 + \cdots + k_r + N} \left( I^{(m)} \right)^r$, where
	$$k_1 + \cdots + k_r + N \geqslant r(m+h-1)-h+c -rm = r(h-1)-h+c = (r-1)(h-1)+c-1,$$
	as desired.
\end{remark}

\bibliographystyle{alpha}
\bibliography{References}

\end{document}